\newtheorem{theorem}{Theorem}
\newtheorem{corollary}[theorem]{Corollary}
\newtheorem{definition}[theorem]{Definition}
\newtheorem{example}[theorem]{Example}
\newtheorem{notation}[theorem]{Notation}
\newtheorem{proposition}[theorem]{Proposition}
\newtheorem{remark}[theorem]{Remark}
\newenvironment{proof}[1][Proof]{\noindent \textbf{#1.} }{\  \rule{0.5em}{0.5em}}
\begin{document}

\title{Ruled surfaces and hyper-dual tangent sphere bundle}
\author{K. Derkaoui\thanks{%
Email{\small : }derkaouikhdidja248@hotmail.com}, \textsc{F. Hathout}\thanks{%
Email{\small : f.hathout@gmail.com \& fouzi.hathout@univ-saida.dz}},\textsc{%
\ M. Bekar}\thanks{%
Corresponding author's email: murat-bekar@hotmail.com}\textsc{\ and Y. Yayli}%
\thanks{%
Email: yayli@science.ankara.edu.tr} \\
$^{\ast }${\small Department of Mathematics, Chlef University, 02000 Chlef,
Algeria.}\\
$^{\dag }${\small Department of Computer science, Sa\"{\i}da University,
20000 Sa\"{\i}da, Algeria.}\\
$^{\ddag }${\small Department of Mathematics Education, Gazi University,
Ankara, Turkey}\\
$^{\S }${\small Department of Mathematics, Ankara University, Ankara, Turkey.%
}}
\date{}
\maketitle

\begin{abstract}
In this study, we define the unit hyper-dual sphere $\mathbb{S}_{\mathbb{D}%
_{2}}$ in hyper-dual vectors $\mathbb{D}_{2}$ and we give E-Study map
version in $\mathbb{D}_{2}$ which prove that $\mathbb{S}_{\mathbb{D}%
_{2}}^{2} $ is isomorphism to the tangent bundle $T\mathbb{S}_{\mathbb{D}%
}^{2}.$ Next, we define ruled surfaces in $\mathbb{D}$, we give its
developability condition and a geometric interpretation in $\mathbb{R}^{3}$
of any curves in $\mathbb{D}_{2}$. Finally, we present a relationship
between a ruled surfaces set in $\mathbb{R}^{3}$ and curves in hyper dual
vectors $\mathbb{D}_{2}$. We close each study with examples.\newline
\newline
\textbf{Keywords:}{\small \ }Hyper-dual vectors, unit hyper-dual sphere,
tangent bundle, E-Study map, Ruled surface.\newline
\textbf{MSC(2010):}\text{ }53A04, 53A05, 53A17, 55R25
\end{abstract}

\section{Introduction}

Hyper-dual numbers were introduced by Fike and Alonso in \cite{fa} which are
extended versions of dual numbers proposed in the 19th century by Cliford in
\cite{c}\textbf{. }Hyper-dual numbers are ones with a real component and a
number of infinitesimal components, usually written as $a_{0}+\varepsilon
a_{1}+\varepsilon ^{\ast }a_{2}+\varepsilon \varepsilon ^{\ast }a_{3}$ where
$\varepsilon $ and $\varepsilon ^{\ast }$ are two different numbers such
that $\varepsilon ^{2}=\varepsilon ^{\ast 2}=0$ and $\varepsilon
,\varepsilon ^{\ast }\neq 0$.

As dual number, the hyper-dual numbers find many applications. Firstly they
have been proposed in an approach to automatic differentiation and after, a
several works use hyper dual numbers in dynamics, complex software, analysis
to design airspace systems and open kinematic chain robot manipulator \cite%
{ch,ch1,fc,fa,rb,v}. Hyper-dual numbers has been also extended to
third-order and generalised to dual number of higher order see \cite{cc,ch2}%
, and from the algebra of hyper-dual vectors which have an interpretation in
geometry by E-study map and special surfaces in Euclidean space \cite%
{aby,hby,st}.

In order hand, ruled surfaces are traced out by the movement of a straight
line through space, and they are usually described by a correspondence
between parametric curves $\beta (t)$ and $\alpha (t).$ The parametric
description of a ruled surface is%
\begin{equation*}
\Phi \left( t,u\right) =\beta (t)+u\alpha (t)
\end{equation*}%
where $\beta $ is the directrix (or base curve) and $\alpha $ the director
curve. A many application of ruled surfaces can be found in computer-aided
geometric design (CAGD), surface design, manufacturing technology,
simulation of rigid bodies and dual vectors (see \cite{b,hby,it,o}).

The pure pose of this study is extended a geometric interpretation of
E-study map for a dual numbers to hyper-dual vectors and general case for
dual numbers of higher order, and present a correspondence between
hyper-dual vectors and ruled surfaces in Euclidean 3-space.

The paper is organised as follow; The section 2 is devoted to the needed
geometry tools of unit sphere in Euclidean $3$-space, a used notations and
an overview on dual vectors. In section 3, we present the algebra of
hyper-dual numbers and vectors denoted $D_{2}$ and $\mathbb{D}_{2},$
respectively. Next, we give the E-Study map version in $\mathbb{D}_{2}$
which can be extended to dual vectors of higher order $\mathbb{D}_{n}$ and
as consequence, we prove that the hyper-dual unit sphere $\mathbb{S}_{%
\mathbb{D}_{2}}^{2}$ is isomorphism to the tangent bundle of $T\mathbb{S}_{%
\mathbb{D}}^{2}$. In section 4, we define ruled surfaces in $\mathbb{D}_{2}$
with its developability condition and we give a geometric interpretation in $%
\mathbb{R}^{3}$ of any curves in $\mathbb{D}_{2}$. Finally, in section 5, we
give a relationship between a ruled surfaces set in $\mathbb{R}^{3}$ and
curves in hyper dual vectors $\mathbb{D}_{2}$. Each sections 4 and 5 are
closed with examples.

\section{Dual numbers and tangent bundle of unit sphere}

We firstly consider some needed definition of tangent bundle of sphere and
ruled surface. In Euclidean $3$-space $\mathbb{E}^{3}=\left( \mathbb{R}%
^{3},\cdot \right) $, the tangent bundle of the unit $2$-sphere $\mathbb{S}%
^{2}$ is given by%
\begin{equation}
T\mathbb{S}^{2}=\{(\gamma ,v)\in \mathbb{R}^{3}\times \mathbb{R}^{3}\mid
\left \vert \gamma \right \vert =1\  \text{and }\gamma \cdot v=0\}
\label{1.1}
\end{equation}%
and the unit tangent bundle is a hypersurface in $T\mathbb{S}^{2}$ defined as%
\begin{eqnarray}
UT\mathbb{S}^{2} &=&\{(\gamma ,v)\in \mathbb{R}^{3}\times \mathbb{R}^{3}\mid
\left \vert \gamma \right \vert =\left \vert v\right \vert =1\  \text{and }%
\gamma \cdot v=0\}  \label{1.2} \\
&=&\{(\gamma ,v)\in \mathbb{S}^{2}\times \mathbb{S}^{2}\mid \gamma \cdot
v=0\}  \notag
\end{eqnarray}%
which is a $3$-dimensional contact manifold$,$ where "$\cdot $" and "$\times
$" denotes, respectively, the usual inner and vector products in $\mathbb{R}%
^{3}$.

Now let $\Phi $ be a surface in $\mathbb{E}^{3}.$ $\Phi $ is a ruled surface
if its parametrization is in the form%
\begin{equation*}
\Phi (t,v)=\beta (t)+u\alpha (t),
\end{equation*}%
where $\beta (t)$ is the directrix (or base curve) and $\alpha (t)$ the
director curve. The surface $\Phi $ is said to be developable if%
\begin{equation*}
\det (\beta ^{\prime }(t),\alpha (t),\alpha ^{\prime }(t))=0
\end{equation*}%
In all sequel, we use the following a list of symbols and notations:

\begin{notation}
.\newline
$a:$ Real number (Tag normal)\newline
$\mathbf{a}:$ Real vector in $\mathbb{R}^{3}$ (Tag bold)\newline
$A:$ Dual number on $\mathbb{R}$ (Capital letter, tag normal)\newline
$D:$ Set of dual numbers on $\mathbb{R}$\newline
$\mathbb{A}:\ $Dual vector on $\mathbb{R}^{3}$(Capital letter,tag
blackboardbold)$\newline
\mathbb{D}:$ Set of dual vector on $\mathbb{R}^{3}$\newline
$A_{2}:$ Hyper-dual number on $D$ (Capital letter, tag normal indexed by 2)%
\newline
$D_{2}:$ Set of hyper-dual number on $\mathbb{R}$\newline
$\mathbb{A}_{2}$: Hyper-dual vector on $\mathbb{D}$(Capital letter, tag
blackboard bold indexed by 2)\newline
$\mathbb{D}_{2}:$ Set of hyper-dual vector on $\mathbb{R}^{3}$\newline
Lowercase greek, Uppercase greek, Uppercase greek indexed by $2$ characters
are curves in $D,\mathbb{D},\mathbb{D}_{2},$ respectively, except with a
note.
\end{notation}

\subsection{Dual numbers and the tangent bundle of unit sphere}

The set of dual numbers on $\mathbb{R}$ is defined to be%
\begin{equation*}
D\  \mathbb{=\{}A=a+\varepsilon a^{\ast }\mid a,a^{\ast }\in \mathbb{R},\text{
}\varepsilon \neq 0\text{ and }\varepsilon ^{2}=0\mathbb{\}}
\end{equation*}%
where $\varepsilon $ is the dual unit satisfying $r\varepsilon =\varepsilon
r $ for all $r\in \mathbb{R}$ and $a$, $a^{\ast }$ are called, respectively,
the non-dual and dual parts of $A$. If $\mathbf{a}$ and $\mathbf{a}^{\ast }$
are vectors in $\mathbb{R}^{3}$, then the set, denoted by $\mathbb{D},$ of
the the combination%
\begin{equation*}
\mathbb{A}=\mathbf{a}+\varepsilon \mathbf{a}^{\ast }
\end{equation*}%
is called the set of dual vector. The addition, inner product and vector
product on $\mathbb{D}$ of two dual vectors $\mathbb{A}=\mathbf{a}%
+\varepsilon \mathbf{a}^{\ast }$ and $\mathbb{B}=\mathbf{b}+\varepsilon
\mathbf{b}^{\ast }$ are defined as%
\begin{equation*}
\mathbb{A}+\mathbb{B=}(\mathbf{a}+\mathbf{b})+\varepsilon (\mathbf{a}^{\ast
}+\mathbf{b}^{\ast }),
\end{equation*}%
\begin{equation*}
\left \langle \mathbb{A},\mathbb{B}\right \rangle =\mathbf{a}\cdot \mathbf{b}%
+\varepsilon (\mathbf{a}^{\ast }\cdot \mathbf{b}+\mathbf{a}\cdot \mathbf{b}%
^{\ast })
\end{equation*}%
and%
\begin{equation*}
\mathbb{A}\times \mathbb{B=}\mathbf{a}\times \mathbf{b}+\varepsilon (\mathbf{%
a}\times \mathbf{b}^{\ast }+\mathbf{a}^{\ast }\times \mathbf{b})
\end{equation*}%
respectively. The norm of $\mathbb{A},$ exists only if $\mathbf{a}=0,$ is
defined to be%
\begin{equation}
\left \vert \mathbb{A}\right \vert =\sqrt{<\mathbb{A},\mathbb{A}>}=\left
\vert \mathbf{a}\right \vert +\varepsilon \frac{\mathbf{a}\cdot \mathbf{a}%
^{\ast }}{\left \vert \mathbf{a}\right \vert }\text{.}  \label{2.1}
\end{equation}%
(for a detail of dual numbers square root see \cite{aby,ch2,hby,v}). $%
\mathbb{A}$ is said to be a unit if $\left \vert \mathbb{A}\right \vert =1$
that means $\left \vert \mathbf{a}\right \vert =1$ and $<\mathbf{a},\mathbf{a%
}^{\ast }>=0$. Hence, we define the unit dual sphere by%
\begin{eqnarray}
\mathbb{S}_{\mathbb{D}}^{2} &=&\mathbb{\{A}=\mathbf{a}+\varepsilon \mathbf{a}%
^{\ast }\in \mathbb{D}\mid \left \vert \mathbb{A}\right \vert =1\mathbb{\}}
\label{2.2} \\
&=&\mathbb{\{(}\mathbf{a},\mathbf{a}^{\ast }\mathbb{)}\in \mathbb{R}^{3}%
\mathbb{\times R}^{3}\mid \left \vert \mathbf{a}\right \vert =1\text{ and }%
\mathbf{a}\cdot \mathbf{a}^{\ast }=0\mathbb{\}}\text{.}  \notag
\end{eqnarray}%
which is isomorphic to the tangent bundle of unit sphere $\mathbb{S}^{2}$
given in (Eq.\ref{1.1}) by the isomorphism%
\begin{equation}
\begin{array}{c}
\mathbb{S}_{\mathbb{D}}^{2}\rightarrow T\mathbb{S}^{2} \\
\mathbb{A}=\mathbf{a}+\varepsilon \mathbf{a}^{\ast }\mapsto \Gamma (t)=(%
\mathbf{a}(t),\mathbf{a}^{\ast }(t))%
\end{array}
\label{2.3}
\end{equation}%
and we have the following theorem which its geometric interpretation is in
\cite{uc}.

\begin{theorem}[\textsc{E. Study Map.}]
\label{TE1}The oriented lines in $\mathbb{R}^{3}$ are in one-to-one
corresponding with the points of the unit dual sphere $\mathbb{S}_{\mathbb{D}%
}^{2}$.
\end{theorem}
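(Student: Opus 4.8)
The plan is to establish the bijection by constructing an explicit correspondence between oriented lines in $\mathbb{R}^{3}$ and points of $\mathbb{S}_{\mathbb{D}}^{2}$, and then verifying that this map is well-defined and invertible. The natural device is the pair of \emph{Pl\"{u}cker coordinates} of a line: given an oriented line $\ell$ in $\mathbb{R}^{3}$, I would pick a unit direction vector $\mathbf{a}$ along $\ell$ and a point $\mathbf{p}$ on $\ell$, and form the moment vector $\mathbf{a}^{\ast }=\mathbf{p}\times \mathbf{a}$. The proposed correspondence is then
\begin{equation*}
\ell \longmapsto \mathbb{A}=\mathbf{a}+\varepsilon \mathbf{a}^{\ast }.
\end{equation*}

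\noindent First I would check that this lands in $\mathbb{S}_{\mathbb{D}}^{2}$. Since $\mathbf{a}$ is a unit direction we have $\left\vert \mathbf{a}\right\vert =1$, and since $\mathbf{a}^{\ast }=\mathbf{p}\times \mathbf{a}$ is orthogonal to $\mathbf{a}$ we get $\mathbf{a}\cdot \mathbf{a}^{\ast }=0$; by the characterization of the unit dual sphere in (Eq.\ref{2.2}), this is exactly the condition for $\mathbb{A}$ to be a unit dual vector. Next I would verify that the map is \emph{well-defined}, i.e. independent of the choice of point $\mathbf{p}$ on $\ell$: replacing $\mathbf{p}$ by $\mathbf{p}+s\mathbf{a}$ changes the moment by $(s\mathbf{a})\times \mathbf{a}=0$, so $\mathbf{a}^{\ast }$ depends only on $\ell$ and not on the chosen point. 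The orientation of $\ell$ fixes the sign of $\mathbf{a}$, so $\mathbb{A}$ is genuinely determined by the oriented line.

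\noindent For the reverse direction, given a unit dual vector $\mathbb{A}=\mathbf{a}+\varepsilon \mathbf{a}^{\ast }$ with $\left\vert \mathbf{a}\right\vert =1$ and $\mathbf{a}\cdot \mathbf{a}^{\ast }=0$, I would recover the line as the set of points $\mathbf{x}$ satisfying $\mathbf{x}\times \mathbf{a}=\mathbf{a}^{\ast }$; concretely one checks that $\mathbf{p}_{0}=\mathbf{a}\times \mathbf{a}^{\ast }$ is the foot of the perpendicular from the origin, and the line is $\ell =\{\mathbf{p}_{0}+s\mathbf{a}\mid s\in \mathbb{R}\}$ oriented by $\mathbf{a}$. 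The orthogonality condition $\mathbf{a}\cdot \mathbf{a}^{\ast }=0$ is precisely what guarantees that the equation $\mathbf{x}\times \mathbf{a}=\mathbf{a}^{\ast }$ is solvable, so every point of $\mathbb{S}_{\mathbb{D}}^{2}$ arises from a line. Showing these two constructions are mutually inverse then establishes the one-to-one correspondence.

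\noindent The main obstacle I anticipate is not any single hard computation but the care needed in the well-definedness and invertibility bookkeeping: one must confirm that the moment vector is invariant under sliding the base point along $\ell$, that $\mathbf{a}\times \mathbf{a}^{\ast }$ indeed yields a point lying on the reconstructed line, and that the unit condition from (Eq.\ref{2.2}) is both necessary and sufficient for solvability of $\mathbf{x}\times \mathbf{a}=\mathbf{a}^{\ast }$. Once these orthogonality and invariance checks are in place, the bijectivity follows directly, and the geometric content is exactly the classical E.~Study correspondence whose interpretation is recorded in \cite{uc}.
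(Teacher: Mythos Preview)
Your argument is the classical Pl\"ucker-coordinate proof of the E.~Study map and is correct as stated; the well-definedness, solvability, and invertibility checks you outline are exactly what is needed. Note, however, that the paper does not actually supply a proof of this theorem: it is recorded as a known background result, with only the reference \cite{uc} given for its geometric interpretation, so there is no ``paper's own proof'' to compare against---your sketch simply fills in what the paper takes for granted.
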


\section{Hyper-dual numbers and the tangent bundle of T$\mathbb{S}^{2}$}

A hyper-dual numbers set is defined as%
\begin{eqnarray*}
D_{2} &=&\left \{ A_{2}=\left( a_{0}+\varepsilon a_{1}\right) +\varepsilon
^{\ast }\left( a_{2}+\varepsilon a_{3}\right) \mid a_{\overline{0,3}}\in
\mathbb{R}\right \} \\
&=&\left \{ A_{2}=A+\varepsilon ^{\ast }A^{\ast }\mid A,A^{\ast }\in D\right
\}
\end{eqnarray*}%
where $\varepsilon ^{2}=\varepsilon ^{\ast 2}=\left( \varepsilon \varepsilon
^{\ast }\right) ^{2}=0$ and $\varepsilon \neq 0,\varepsilon ^{\ast }\neq
0,\varepsilon \neq \varepsilon ^{\ast },\varepsilon \varepsilon ^{\ast
}=\varepsilon \varepsilon ^{\ast }\neq 0.$ Addition and multiplication of
any two hyper-dual numbers
\begin{eqnarray*}
A_{2} &=&A+\varepsilon ^{\ast }A^{\ast }=a_{0}+\varepsilon a_{1}+\varepsilon
^{\ast }a_{2}+\varepsilon \varepsilon ^{\ast }a_{3}\text{ and} \\
B_{2} &=&B+\varepsilon ^{\ast }B^{\ast }=b_{0}+\varepsilon b_{1}+\varepsilon
^{\ast }b_{2}+\varepsilon \varepsilon ^{\ast }b_{3}
\end{eqnarray*}%
are defined, respectively, as%
\begin{eqnarray*}
A_{2}+B_{2} &=&\left( A+A^{\ast }\right) +\varepsilon ^{\ast }\left(
B+B^{\ast }\right) \\
&=&\left( a_{0}+b_{0}\right) +\varepsilon \left( a_{1}+b_{1}\right)
+\varepsilon ^{\ast }\left( a_{2}+b_{2}\right) +\varepsilon \varepsilon
^{\ast }\left( a_{3}+b_{3}\right)
\end{eqnarray*}%
and%
\begin{eqnarray*}
A_{2}B_{2} &=&AB+\varepsilon ^{\ast }\left( AB^{\ast }+A^{\ast }B\right) \\
&=&\left( a_{0}b_{0}\right) +\varepsilon \left( a_{0}b_{1}+a_{1}b_{0}\right)
+\varepsilon ^{\ast }\left( a_{0}b_{2}+a_{2}b_{0}\right) \\
&&+\varepsilon \varepsilon ^{\ast }\left(
a_{0}b_{3}+a_{1}b_{2}+a_{2}b_{1}+a_{3}b_{0}\right)
\end{eqnarray*}%
Now, we define the hyper-dual vectors set by%
\begin{equation*}
\mathbb{D}_{2}\mathbb{=}\left \{ \mathbb{A}_{2}=\mathbb{A}+\varepsilon
^{\ast }\mathbb{A}^{\ast }\mid \mathbb{A},\mathbb{A}^{\ast }\in \mathbb{D}%
\text{ and }\varepsilon ^{\ast 2}=0\right \}
\end{equation*}%
The inner product and vector product on $\mathbb{D}_{2}$ of two hyper-dual
vectors $\mathbb{A}_{2}=\mathbb{A}+\varepsilon \mathbb{A}^{\ast }$ and $%
\mathbb{B}_{2}=\mathbb{B}+\varepsilon ^{\ast }\mathbb{B}^{\ast }$ , where $%
\mathbb{A},\mathbb{A}^{\ast },\mathbb{B},\mathbb{B}^{\ast }\in \mathbb{D}$
are defined as%
\begin{eqnarray}
\left \langle \mathbb{A}_{2},\mathbb{B}_{2}\right \rangle _{2} &=&\left
\langle \mathbb{A}\mathbf{,}\mathbb{B}\right \rangle +\varepsilon ^{\ast
}\left( \left \langle \mathbb{A}\mathbf{,}\mathbb{B}^{\ast }\right \rangle
+\left \langle \mathbb{A}^{\ast }\mathbf{,}\mathbb{B}\right \rangle \right)
\label{2.41} \\
&=&\mathbf{a}_{0}\cdot \mathbf{b}_{0}+\varepsilon \left( \mathbf{a}_{0}\cdot
\mathbf{b}_{1}+\mathbf{a}_{1}\cdot \mathbf{b}_{0}\right) +\varepsilon ^{\ast
}\left( \mathbf{a}_{0}\cdot \mathbf{b}_{2}+\mathbf{a}_{2}\cdot \mathbf{b}%
_{0}\right)  \notag \\
&&+\varepsilon \varepsilon ^{\ast }\left( \mathbf{a}_{0}\cdot \mathbf{b}_{3}+%
\mathbf{a}_{1}\cdot \mathbf{b}_{2}+\mathbf{a}_{2}\cdot \mathbf{b}_{1}+%
\mathbf{a}_{3}\cdot \mathbf{b}_{0}\right)  \notag
\end{eqnarray}%
and%
\begin{eqnarray}
\mathbb{A}_{2}\times _{2}\mathbb{B}_{2} &=&\mathbb{A}\mathbf{\times }\mathbb{%
B}+\varepsilon ^{\ast }\left( \mathbb{A}\mathbf{\times }\mathbb{B}^{\ast }+%
\mathbb{A}^{\ast }\mathbf{\times }\mathbb{B}\right)  \label{2.42} \\
&=&\mathbf{a}_{0}\mathbf{\times b}_{0}+\varepsilon \left( \mathbf{a}_{0}%
\mathbf{\times b}_{1}+\mathbf{a}_{1}\mathbf{\times b}_{0}\right)
+\varepsilon ^{\ast }\left( \mathbf{a}_{0}\mathbf{\times b}_{2}+\mathbf{a}%
_{2}\mathbf{\times b}_{0}\right)  \notag \\
&&+\varepsilon \varepsilon ^{\ast }\left( \mathbf{a}_{0}\mathbf{\times b}%
_{3}+\mathbf{a}_{1}\mathbf{\times b}_{2}+\mathbf{a}_{2}\mathbf{\times b}_{1}+%
\mathbf{a}_{3}\mathbf{\times b}_{0}\right)  \notag
\end{eqnarray}%
respectively. The norm of hyper-dual number $\mathbb{A}_{2}$ is%
\begin{eqnarray}
\left \vert \mathbb{A}_{2}\right \vert &=&\sqrt{<\mathbb{A}_{2},\mathbb{A}%
_{2}>_{2}}=\left \vert \mathbb{A}\right \vert +\varepsilon ^{\ast }\frac{<%
\mathbb{A},\mathbb{A}^{\ast }>}{\left \vert \mathbb{A}\right \vert }
\label{2.4} \\
&=&\left \vert \mathbf{a}_{0}\right \vert +\varepsilon \frac{\mathbf{a}%
_{0}\cdot \mathbf{a}_{1}}{\left \vert \mathbf{a}_{0}\right \vert }%
+\varepsilon ^{\ast }\frac{\mathbf{a}_{0}\cdot \mathbf{a}_{2}}{\left \vert
\mathbf{a}_{0}\right \vert }  \notag \\
&&+\varepsilon \varepsilon ^{\ast }\left( \frac{\mathbf{a}_{0}\cdot \mathbf{a%
}_{3}}{\left \vert \mathbf{a}_{0}\right \vert }+\frac{\mathbf{a}_{1}\cdot
\mathbf{a}_{2}}{\left \vert \mathbf{a}_{0}\right \vert }-\frac{\mathbf{a}%
_{0}\cdot \mathbf{a}_{1}+\mathbf{a}_{0}\cdot \mathbf{a}_{2}}{\left \vert
\mathbf{a}_{0}\right \vert ^{3}}\right)  \notag
\end{eqnarray}%
where $\mathbf{a}_{0}\neq 0$ for the existence of $\mathbb{A}_{2}.$
Moreover, $\mathbb{A}_{2}$ is called unit if its norm is $1$. (here, the
used hyper-dual numbers square root can be found in the detail in \cite%
{aby,ch2,v})

Similar as unit dual sphere $\mathbb{S}_{\mathbb{D}}^{2}$ , we define the
hyper-dual unit sphere $\mathbb{S}_{\mathbb{D}_{2}}^{2}$ as%
\begin{eqnarray}
\mathbb{S}_{\mathbb{D}_{2}}^{2} &=&\mathbb{\{A}_{2}=\mathbb{A}+\varepsilon
^{\ast }\mathbb{A}^{\ast }\in \mathbb{D}_{2}\mid \left \vert \mathbb{A}%
_{2}\right \vert =1\mathbb{\}}  \label{2.5} \\
&=&\mathbb{\{(\mathbb{A}},\mathbb{\mathbb{A}^{\ast })}\in \mathbb{D}^{2}\mid
\left \vert \mathbb{\mathbb{A}}\right \vert =1\text{ and }<\mathbb{A},%
\mathbb{A}^{\ast }>=0\mathbb{\}}\text{.}  \notag
\end{eqnarray}%
and using the Eq.(\ref{2.4}), $\mathbb{S}_{\mathbb{D}}^{2}$ can be given as%
\begin{equation}
\mathbb{S}_{\mathbb{D}_{2}}^{2}=\left \{ \mathbb{(\mathbb{A}},\mathbb{%
\mathbb{A}^{\ast })}\in \mathbb{D}_{2}^{2}\mid \left \vert \mathbf{a}%
_{0}\right \vert =1\text{ and }\left \{
\begin{array}{c}
\mathbf{a}_{0}\cdot \mathbf{a}_{1}=\mathbf{a}_{0}\cdot \mathbf{a}_{2}=0 \\
\mathbf{a}_{0}\cdot \mathbf{a}_{3}=-\mathbf{a}_{1}\cdot \mathbf{a}_{2}%
\end{array}%
\right. \right \} \text{.}  \label{2.51}
\end{equation}

\begin{theorem}
\label{T1}The hyper-dual unit sphere $\mathbb{S}_{\mathbb{D}_{2}}^{2}$ is
isomorphism to $T\mathbb{S}_{\mathbb{D}}^{2}$ the tangent bundle of $\mathbb{%
S}_{\mathbb{D}}^{2}$.
\end{theorem}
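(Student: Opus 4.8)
The plan is to mirror the dual-number case (Eq.\ref{2.3}) by writing down an explicit bijection $\mathbb{S}_{\mathbb{D}_2}^2 \to T\mathbb{S}_{\mathbb{D}}^2$ and checking that it is a diffeomorphism. First I would fix the correct model of the target: by analogy with the definition (Eq.\ref{1.1}) of $T\mathbb{S}^2$, the tangent bundle of the unit dual sphere is
\begin{equation*}
T\mathbb{S}_{\mathbb{D}}^2 = \{(\mathbb{A},\mathbb{A}^*)\in \mathbb{D}\times \mathbb{D}\mid |\mathbb{A}|=1 \ \text{and}\ \langle \mathbb{A},\mathbb{A}^*\rangle = 0\}.
\end{equation*}
The tangency condition $\langle \mathbb{A},\mathbb{A}^*\rangle = 0$ arises from differentiating the defining relation $\langle \mathbb{A},\mathbb{A}\rangle = 1$ of $\mathbb{S}_{\mathbb{D}}^2$ along a curve, so that every tangent vector $\mathbb{A}^*$ at $\mathbb{A}$ satisfies $\langle \mathbb{A},\mathbb{A}^*\rangle = 0$. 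A dimension count confirms the object: $|\mathbb{A}|=1$ is a dual (hence two real) equation cutting $\mathbb{D}\cong \mathbb{R}^6$ down to the $4$-dimensional base, while $\langle \mathbb{A},\mathbb{A}^*\rangle = 0$ is again two real equations leaving a $4$-dimensional fibre; the total of $8$ real dimensions matches the real dimension of $\mathbb{S}_{\mathbb{D}_2}^2$ read off from (Eq.\ref{2.51}).

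Next I would introduce the candidate map
\begin{equation*}
\Psi : \mathbb{S}_{\mathbb{D}_2}^2 \to T\mathbb{S}_{\mathbb{D}}^2, \qquad \mathbb{A}_2 = \mathbb{A} + \varepsilon^* \mathbb{A}^* \mapsto (\mathbb{A},\mathbb{A}^*),
\end{equation*}
which simply records the non-$\varepsilon^*$ and $\varepsilon^*$ parts of $\mathbb{A}_2$. The essential point, already delivered by the characterization (Eq.\ref{2.5}), is that $|\mathbb{A}_2|=1$ is equivalent to the two dual conditions $|\mathbb{A}|=1$ and $\langle \mathbb{A},\mathbb{A}^*\rangle = 0$; these are exactly the membership conditions for $T\mathbb{S}_{\mathbb{D}}^2$, so $\Psi$ is well-defined and maps into the target.

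Finally I would establish bijectivity and smoothness. The inverse is the obvious assembly $(\mathbb{A},\mathbb{A}^*)\mapsto \mathbb{A}+\varepsilon^*\mathbb{A}^*$, and since the defining conditions on the two sides coincide this is a genuine two-sided inverse. In the underlying real coordinates $(\mathbf{a}_0,\mathbf{a}_1,\mathbf{a}_2,\mathbf{a}_3)$ both $\Psi$ and $\Psi^{-1}$ are merely linear regroupings, hence smooth, so $\Psi$ is a diffeomorphism and the claimed isomorphism follows. The one step I expect to require care is the first, namely justifying that $\langle \mathbb{A},\mathbb{A}^*\rangle = 0$ is the honest tangency condition and that the dimensions match; once the target bundle is correctly identified, the remainder is bookkeeping guaranteed by (Eq.\ref{2.5}).
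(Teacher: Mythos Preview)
Your proposal is correct and follows essentially the same approach as the paper: both define the map $\mathbb{A}+\varepsilon^{\ast}\mathbb{A}^{\ast}\mapsto(\mathbb{A},\mathbb{A}^{\ast})$ and observe, via the characterization (Eq.\ref{2.5}) that $|\mathbb{A}_2|=1$ is equivalent to $|\mathbb{A}|=1$ and $\langle\mathbb{A},\mathbb{A}^{\ast}\rangle=0$, that this is an isomorphism onto $T\mathbb{S}_{\mathbb{D}}^{2}$. Your version is in fact more careful than the paper's one-line proof, supplying the model for $T\mathbb{S}_{\mathbb{D}}^{2}$, the dimension count, the explicit inverse, and the smoothness argument that the paper leaves implicit.
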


\begin{proof}
If we consider the map%
\begin{equation}
\begin{array}{c}
\mathbb{S}_{\mathbb{D}_{2}}^{2}\rightarrow T\mathbb{S}_{\mathbb{D}}^{2} \\
\mathbb{A}_{2}=\mathbb{A}+\varepsilon ^{\ast }\mathbb{A}^{\ast }\mapsto
\Gamma _{2}(t)=(\mathbb{A}\left( t\right) ,\mathbb{A}^{\ast }\left( t\right)
)%
\end{array}
\label{2.6}
\end{equation}%
we immediately notice that its an isomorphism, where $\mathbb{A}\left(
t\right) $ and $\mathbb{A}^{\ast }\left( t\right) $ are a curves in $\mathbb{%
D}.$
\end{proof}

Now, we can extended E. Study map to hyper-dual numbers in the following.

\begin{theorem}[\textsc{E. Study Hyper-Map.}]
The oriented lines in $\mathbb{D}$ are in one-to-one corresponding with the
points of the unit hyper-dual sphere $\mathbb{S}_{\mathbb{D}}^{2}$.
\end{theorem}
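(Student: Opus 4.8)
The plan is to imitate the classical E.~Study map, replacing the ground ring $\mathbb{R}$ by the dual numbers $D$ throughout. The theorem asserts a one-to-one correspondence between oriented lines in $\mathbb{D}$ (that is, lines in the module $\mathbb{D}$ whose coordinates are themselves dual vectors, equivalently oriented lines in the $D$-module structure) and the points of $\mathbb{S}_{\mathbb{D}}^{2}$. Since the classical statement (Theorem~\ref{TE1}) identifies oriented lines in $\mathbb{R}^{3}$ with points of $\mathbb{S}_{\mathbb{D}}^{2}\cong T\mathbb{S}^{2}$ via the pair $(\text{direction},\text{moment})$, I would set up the same dictionary one level up: an oriented line in $\mathbb{D}$ is specified by a unit direction vector together with a moment vector orthogonal to it, but now both vectors live in $\mathbb{D}$ rather than in $\mathbb{R}^{3}$.

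First I would fix notation for an oriented line in $\mathbb{D}$. Given a point $\mathbb{P}\in\mathbb{D}$ lying on the line and a unit direction $\mathbb{A}\in\mathbb{D}$ with $\langle\mathbb{A},\mathbb{A}\rangle = 1$, define the moment $\mathbb{A}^{\ast} = \mathbb{P}\times\mathbb{A}$. The first step is to check that this construction produces exactly a point of $\mathbb{S}_{\mathbb{D}_{2}}^{2}$: by the characterization in (Eq.~\ref{2.5}) a point is a pair $(\mathbb{A},\mathbb{A}^{\ast})\in\mathbb{D}^{2}$ with $|\mathbb{A}|=1$ and $\langle\mathbb{A},\mathbb{A}^{\ast}\rangle = 0$. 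The norm condition is the unit-direction hypothesis, and the orthogonality $\langle\mathbb{A},\mathbb{P}\times\mathbb{A}\rangle = 0$ follows from the standard scalar-triple-product identity, which I would verify holds over $D$ by expanding the dual vector product in (the dual analogue of Eq.~\ref{2.42}) and using that the real triple product vanishes in each graded component. Note that the statement as written refers to $\mathbb{S}_{\mathbb{D}}^{2}$, but by Theorem~\ref{T1} this is identified with $T\mathbb{S}_{\mathbb{D}}^{2}$ and hence with $\mathbb{S}_{\mathbb{D}_{2}}^{2}$, so I would phrase the correspondence through that isomorphism.

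Next I would establish that the map is well defined (independent of the chosen point $\mathbb{P}$) and injective. Well-definedness: replacing $\mathbb{P}$ by $\mathbb{P} + \lambda\mathbb{A}$ for any $\lambda\in D$ changes the moment by $\lambda\mathbb{A}\times\mathbb{A} = 0$, so the pair $(\mathbb{A},\mathbb{A}^{\ast})$ depends only on the line, not its parametrization. Injectivity and surjectivity: given a point $(\mathbb{A},\mathbb{A}^{\ast})\in\mathbb{S}_{\mathbb{D}_{2}}^{2}$, I would recover the line by solving $\mathbb{P}\times\mathbb{A} = \mathbb{A}^{\ast}$; the orthogonality $\langle\mathbb{A},\mathbb{A}^{\ast}\rangle=0$ together with $|\mathbb{A}|=1$ guarantees a solution, with $\mathbb{P} = \mathbb{A}\times\mathbb{A}^{\ast}$ giving the foot of the perpendicular and the full preimage being the line through $\mathbb{P}$ in direction $\mathbb{A}$. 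This exhibits a two-sided inverse and hence the bijection.

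The main obstacle will be the invertibility of the cross-product equation $\mathbb{P}\times\mathbb{A}=\mathbb{A}^{\ast}$ over the dual ring $D$, since $D$ is not a field but a ring with nilpotents, so the usual linear-algebra argument that recovers $\mathbb{P}$ must be checked component by component across the $\varepsilon$-grading. Concretely, I would expand $\mathbb{A}=\mathbf{a}+\varepsilon\mathbf{a}^{\ast}$ and solve the real part first (where $|\mathbf{a}|=1$ makes the cross-product map from the plane orthogonal to $\mathbf{a}$ an isomorphism onto itself), then solve the $\varepsilon$-part as a consistent inhomogeneous linear system whose solvability is exactly guaranteed by the defining relations of $\mathbb{S}_{\mathbb{D}_{2}}^{2}$ recorded in (Eq.~\ref{2.51}). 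Once this graded solvability is confirmed, the correspondence and its inverse are both explicit, and the one-to-one claim follows; everything else reduces to routine verification of the dual-number identities already available from the algebraic setup in this section.
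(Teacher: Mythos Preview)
Your proposal is correct and is in fact considerably more detailed than what the paper offers: the paper states the E.~Study Hyper-Map theorem without proof, presenting it simply as the natural extension of the classical E.~Study map (Theorem~\ref{TE1}) obtained by replacing $\mathbb{R}$ with $D$ throughout, and immediately passes to the consequence in Eq.~(\ref{1.4}). Your plan---direction plus moment over $D$, well-definedness under translation along the line, and recovery of the line via $\mathbb{P}=\mathbb{A}\times\mathbb{A}^{\ast}$---is exactly the argument the paper is implicitly invoking, and your attention to the graded solvability over the ring $D$ (solving the real part first, then the $\varepsilon$-part using the constraints of Eq.~(\ref{2.51})) supplies the only nontrivial verification that the paper leaves to the reader. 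You also correctly observe and resolve the apparent typo in the target sphere by routing through Theorem~\ref{T1}.
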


As consequence, we have an extended formula of Eq.(\ref{2.3}) given by%
\begin{equation}
\mathbb{S}_{\mathbb{D}_{2}}^{2}\rightarrow T\mathbb{S}_{\mathbb{D}%
}^{2}\rightarrow TT\mathbb{S}^{2}  \label{1.4}
\end{equation}%
where $TT\mathbb{S}^{2}$ is the tangent bundle of $T\mathbb{S}^{2}.$

By taking only a unit hyper-dual part in $\mathbb{S}_{\mathbb{D}_{2}}^{2},$
we can get the following corollary.

\begin{corollary}
\label{C1}The unit hyper-dual unit sphere $U\mathbb{S}_{\mathbb{D}_{2}}^{2}$
defined by
\begin{equation*}
U\mathbb{S}_{\mathbb{D}_{2}}^{2}=\left \{ \mathbb{A}_{2}=\mathbb{A}%
+\varepsilon ^{\ast }\mathbb{A}^{\ast }\in \mathbb{S}_{\mathbb{D}%
_{2}}^{2}\mid \left \vert \mathbb{A}^{\ast }\right \vert =1\right \}
\end{equation*}%
is isomorphism to the unit tangent bundle of $\mathbb{S}_{\mathbb{D}}^{2}$,
i.e.%
\begin{equation}
\begin{array}{c}
U\mathbb{S}_{\mathbb{D}_{2}}^{2}\rightarrow T_{1}\mathbb{S}_{\mathbb{D}%
}^{2}\rightarrow T_{1}\left( T\mathbb{S}^{2}\right) \\
\mathbb{A}_{2}=\mathbb{A}+\varepsilon ^{\ast }\mathbb{A}^{\ast }\mapsto
\Gamma _{2}(t)=(\mathbb{A}\left( t\right) ,\mathbb{A}^{\ast }\left( t\right)
)%
\end{array}
\label{1.5}
\end{equation}%
where the hyper dual part $\mathbb{A}^{\ast }$ is unit $(\left \vert \mathbb{%
A}^{\ast }\right \vert =1).$
\end{corollary}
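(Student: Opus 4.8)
The plan is to deduce Corollary \ref{C1} as a restriction of the isomorphism already furnished by Theorem \ref{T1}, rather than building a new map from scratch. Write $\phi$ for the correspondence of (Eq.\ref{2.6}), $\phi(\mathbb{A}_2)=(\mathbb{A},\mathbb{A}^{\ast})$, which Theorem \ref{T1} identifies as an isomorphism from $\mathbb{S}_{\mathbb{D}_2}^2$ onto the tangent bundle $T\mathbb{S}_{\mathbb{D}}^2$. Under $\phi$ the non-hyper-dual part $\mathbb{A}$ is the base point on $\mathbb{S}_{\mathbb{D}}^2$ and the hyper-dual part $\mathbb{A}^{\ast}$ is the fibre (tangent) vector at $\mathbb{A}$, its tangency being guaranteed by the relation $\langle\mathbb{A},\mathbb{A}^{\ast}\rangle=0$ already present in (Eq.\ref{2.5}).

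First I would record that the unit tangent bundle $T_1\mathbb{S}_{\mathbb{D}}^2$ is the hypersurface of $T\mathbb{S}_{\mathbb{D}}^2$ obtained by imposing the extra normalisation $|\mathbb{A}^{\ast}|=1$ on the fibre vector, in exact parallel with the way $UT\mathbb{S}^2$ sits inside $T\mathbb{S}^2$ in (Eq.\ref{1.2}). The crux is then the observation that the single defining condition of $U\mathbb{S}_{\mathbb{D}_2}^2$, namely $|\mathbb{A}^{\ast}|=1$, is transported by $\phi$ \emph{verbatim} onto this normalisation, since $\phi$ leaves $\mathbb{A}^{\ast}$ untouched. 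Hence $\phi$ carries $U\mathbb{S}_{\mathbb{D}_2}^2$ bijectively onto $T_1\mathbb{S}_{\mathbb{D}}^2$, and as the restriction of an isomorphism to a subset and its image remains an isomorphism, the first arrow of (Eq.\ref{1.5}) is established. To reach the second arrow I would apply the tangent lift of the E-Study isomorphism $\mathbb{S}_{\mathbb{D}}^2\cong T\mathbb{S}^2$ of (Eq.\ref{2.3}), which gives $T\mathbb{S}_{\mathbb{D}}^2\cong TT\mathbb{S}^2$ as in (Eq.\ref{1.4}); restricting to unit fibres at each stage yields the full chain $U\mathbb{S}_{\mathbb{D}_2}^2\to T_1\mathbb{S}_{\mathbb{D}}^2\to T_1(T\mathbb{S}^2)$.

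To make the description explicit I would unpack $|\mathbb{A}^{\ast}|=1$ with the dual norm (Eq.\ref{2.1}): writing $\mathbb{A}^{\ast}=\mathbf{a}_2+\varepsilon\mathbf{a}_3$ this amounts to $|\mathbf{a}_2|=1$ and $\mathbf{a}_2\cdot\mathbf{a}_3=0$, which together with the relations of (Eq.\ref{2.51}) furnish the complete real-component description of $U\mathbb{S}_{\mathbb{D}_2}^2$. I expect the only subtle point to be the well-definedness of this normalisation: the dual norm in (Eq.\ref{2.1}) is defined only when the non-dual part is nonzero, so I would check that imposing $|\mathbb{A}^{\ast}|=1$ indeed forces $\mathbf{a}_2\neq 0$, thereby ensuring that the restriction is taken over a genuine submanifold and that no point is gained or lost in passing through $\phi$.
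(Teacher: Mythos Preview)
Your proposal is correct and follows essentially the same approach as the paper: the paper does not give a separate proof of Corollary~\ref{C1} but simply prefaces it with the remark that one obtains it ``by taking only a unit hyper-dual part in $\mathbb{S}_{\mathbb{D}_{2}}^{2}$,'' which is precisely your strategy of restricting the isomorphism of Theorem~\ref{T1} to the locus $|\mathbb{A}^{\ast}|=1$. Your write-up is considerably more detailed than the paper's one-line justification, and your care about the well-definedness of the dual norm (requiring $\mathbf{a}_2\neq 0$) is a point the paper leaves implicit.
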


\bigskip

The extension of the Eq.(\ref{2.3}) and the Theorem \ref{T1} for dual and
hyper dual numbers, respectively, to the dual numbers of higher order $%
\mathbb{D}_{n}$ given in \cite{ch2} is give the following Remark.

\begin{remark}
We can naturally extended the Theorem's \ref{T1} result to the dual numbers
of order $n$ $\mathbb{D}_{n}$ as%
\begin{equation*}
\mathbb{S}_{\mathbb{D}_{n}}^{2}\simeq T\mathbb{S}_{\mathbb{D}%
_{n-1}}^{2}\simeq \underset{n\text{ }times}{\underbrace{T\cdots T}}\mathbb{S}%
^{2}
\end{equation*}%
where $\underset{n\text{ }times}{\underbrace{T\cdots T}}\mathbb{S}^{2}=%
\underset{n\text{ }times}{\underbrace{T\left( \cdots T\left( T\mathbb{S}%
^{2}\right) \right) }}$ and for \textsc{E. Study map} and \textsc{E. Study
Hyper-Map} to \textsc{E. Study }map of order $n$.
\end{remark}

\section{Ruled surfaces according to hyper-dual tangent sphere bundle}

We consider, in this section, the rules surface in $\mathbb{D}$ and its
developability condition and we interpret geometrically in $\mathbb{R}^{3}$
any curves in $\mathbb{D}$.

\begin{definition}
Ruled surface with parameter $t$ in $\mathbb{D}$ is defined by
\begin{equation*}
\Phi _{2}\left( t,\mathbb{U}\right) =\mathbb{B}\left( t\right) +\mathbb{UB}%
^{\ast }\left( t\right)
\end{equation*}%
where $\mathbb{B}\left( t\right) $ is the base curve, $\mathbb{B}^{\ast
}\left( t\right) $ is a ruling of $\Phi $ and $\mathbb{U}=u+\varepsilon
u^{\ast }\in D$.
\end{definition}

\begin{theorem}
\label{T2}Let denoted by $\Gamma _{2}(t)=\left( \mathbb{A}(t),\mathbb{A}%
^{\ast }(t)\right) $ the hyper-dual curve on hyper-dual unit sphere $\mathbb{%
S}_{\mathbb{D}_{2}}^{2}$, then each curve on $\mathbb{S}_{\mathbb{D}%
_{2}}^{2}(\cong T\mathbb{S}_{\mathbb{D}}^{2})$ corresponds to unique ruled
surface in $\mathbb{D}$ i.e.%
\begin{equation*}
\begin{array}{c}
\mathbb{S}_{\mathbb{D}_{2}}^{2}\rightarrow RS\left( \mathbb{D}\right) \\
\Gamma _{2}(t)\mapsto \Phi _{2}\left( t,\mathbb{U}\right) =\mathbb{A}\left(
t\right) \times \mathbb{A}^{\ast }\left( t\right) +\mathbb{UA}\left( t\right)%
\end{array}%
\end{equation*}%
where $RS\left( \mathbb{D}\right) $ is the set of the ruled surfaces in $%
\mathbb{D}$ and $\mathbb{U=}u+\varepsilon u^{\ast }$ is dual number.
\end{theorem}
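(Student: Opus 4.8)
The plan is to realize this as the hyper-dual lift of the classical E.~Study correspondence between curves on the unit dual sphere and ruled surfaces in $\mathbb{R}^{3}$: there a unit dual vector $\mathbf{a}+\varepsilon\mathbf{a}^{\ast}$ encodes an oriented line whose common-perpendicular foot (from the origin) is $\mathbf{a}\times\mathbf{a}^{\ast}$, so a dual curve sweeps out the ruled surface $\mathbf{a}(t)\times\mathbf{a}^{\ast}(t)+u\,\mathbf{a}(t)$. I would simply repeat this construction one level up, replacing the Euclidean vectors $\mathbf{a},\mathbf{a}^{\ast}\in\mathbb{R}^{3}$ by the dual vectors $\mathbb{A},\mathbb{A}^{\ast}\in\mathbb{D}$ and the scalar $u$ by the dual number $\mathbb{U}$. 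Concretely, given $\Gamma_{2}(t)=(\mathbb{A}(t),\mathbb{A}^{\ast}(t))$ on $\mathbb{S}_{\mathbb{D}_{2}}^{2}$, I set the base curve $\mathbb{B}(t)=\mathbb{A}(t)\times\mathbb{A}^{\ast}(t)$ and the ruling $\mathbb{A}(t)$; the resulting map $\Phi_{2}(t,\mathbb{U})=\mathbb{B}(t)+\mathbb{U}\,\mathbb{A}(t)$ then has exactly the shape required by the Definition, so it lands in $RS(\mathbb{D})$ and the assignment is well defined.

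Next I would check that this assignment carries the sphere constraints into the expected geometric structure. By $(\ref{2.5})$ the curve $\Gamma_{2}$ satisfies $|\mathbb{A}|=1$ and $\langle\mathbb{A},\mathbb{A}^{\ast}\rangle=0$ in $\mathbb{D}$. Using the dual inner and vector products $(\ref{2.41})$--$(\ref{2.42})$, the base curve is orthogonal to the ruling, since $\langle\mathbb{A}\times\mathbb{A}^{\ast},\mathbb{A}\rangle=0$ is a dual scalar triple product with a repeated entry; this identifies $\mathbb{B}$ as the distinguished (foot-of-perpendicular) base curve and confirms that the surface really is ruled along $\mathbb{A}$. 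The one computational tool I need is the $\varepsilon^{\ast}$-bilinear version of the $\mathrm{bac}$--$\mathrm{cab}$ identity $\mathbb{X}\times(\mathbb{Y}\times\mathbb{Z})=\langle\mathbb{X},\mathbb{Z}\rangle\mathbb{Y}-\langle\mathbb{X},\mathbb{Y}\rangle\mathbb{Z}$, which transfers verbatim to $\mathbb{D}$ because both products are defined as the bilinear extensions of the Euclidean ones; if required this is confirmed by a short split into non-dual and dual parts.

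The substance of the statement is the word \emph{unique}, i.e.\ that the assignment is one-to-one, and for this I would exhibit the inverse by recovering $(\mathbb{A},\mathbb{A}^{\ast})$ from $\Phi_{2}$. The ruling direction returns the unit vector $\mathbb{A}(t)$ directly, and applying the identity above together with $|\mathbb{A}|=1$ and $\langle\mathbb{A},\mathbb{A}^{\ast}\rangle=0$ gives
\begin{equation*}
\mathbb{A}\times\mathbb{B}=\mathbb{A}\times(\mathbb{A}\times\mathbb{A}^{\ast})=\langle\mathbb{A},\mathbb{A}^{\ast}\rangle\,\mathbb{A}-|\mathbb{A}|^{2}\,\mathbb{A}^{\ast}=-\mathbb{A}^{\ast},
\end{equation*}
so $\mathbb{A}^{\ast}=\mathbb{B}\times\mathbb{A}$ is reconstructed from the base curve and the ruling. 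Hence two curves producing the same ruled surface must share the same $\mathbb{A}$ and the same $\mathbb{A}^{\ast}$, and the correspondence $\Gamma_{2}\mapsto\Phi_{2}$ is injective, which delivers the claimed uniqueness.

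I expect the only delicate point to be bookkeeping with the norm rather than the geometry: one must read $|\mathbb{A}|=1$ and $\langle\mathbb{A},\mathbb{A}^{\ast}\rangle=0$ as full dual-number equations, as encoded by the expanded conditions $(\ref{2.51})$, and make sure the square-root formula $(\ref{2.4})$ is used so that the $\varepsilon$-parts of these constraints are honoured in the triple-product simplifications. The orthogonality $\langle\mathbb{A},\mathbb{A}^{\ast}\rangle=0$ is precisely what kills the unwanted term in the $\mathrm{bac}$--$\mathrm{cab}$ reduction, so the degenerate case $\mathbb{A}\parallel\mathbb{A}^{\ast}$ never arises; everything else is the formal $\varepsilon^{\ast}$-extension of the known dual-sphere/ruled-surface dictionary.
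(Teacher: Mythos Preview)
Your argument is correct and, in fact, supplies more than the paper does: in the paper Theorem~\ref{T2} is stated without proof, with only the remark that it generalises the particular case of Theorem~2 in \cite{hby}. Your proposal makes that generalisation explicit by lifting the classical E.~Study dictionary one level up, replacing real vectors by dual vectors and real scalars by dual numbers, which is exactly the intended mechanism.

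The part of your write-up that goes beyond the paper is the injectivity argument, where you recover $\mathbb{A}^{\ast}$ from the base curve via the dual $\mathrm{bac}$--$\mathrm{cab}$ identity and the sphere constraints $|\mathbb{A}|=1$, $\langle\mathbb{A},\mathbb{A}^{\ast}\rangle=0$. This is the right way to justify the word ``unique'' and is not spelled out anywhere in the paper. One small caveat worth making explicit: recovering $\mathbb{A}$ as ``the ruling direction'' presumes the ruled surface is given together with its ruling structure (not merely as a point set), and the unit condition $|\mathbb{A}|=1$ fixes the remaining scaling ambiguity up to sign; your reconstruction $\mathbb{A}^{\ast}=\mathbb{B}\times\mathbb{A}$ is then insensitive to shifts $\mathbb{B}\mapsto\mathbb{B}+\Lambda\,\mathbb{A}$ of the base curve along the ruling, so the inverse is well defined at the level of ruled surfaces. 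With that clarification your proof is complete and aligned with the paper's (implicit) approach.
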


The Theorem \ref{T2} generalise the particular case given by the Theorem 2
in \cite{hby}. Using the the Corollary \ref{C1}, the isomorphism%
\begin{equation*}
U\mathbb{S}_{\mathbb{D}_{2}}^{2}\simeq T_{1}\mathbb{S}_{\mathbb{D}%
}^{2}=\left \{ \mathbb{A}_{2}=\mathbb{A}+\varepsilon ^{\ast }\mathbb{A}%
^{\ast }\mid \left \vert \mathbb{A}\right \vert =\left \vert \mathbb{A}%
^{\ast }\right \vert =1,\  \left \langle \mathbb{A},\mathbb{A}^{\ast }\right
\rangle =0\right \}
\end{equation*}%
and the Eq.(\ref{2.51}), the last equation turns to%
\begin{equation}
U\mathbb{S}_{\mathbb{D}_{2}}^{2}=\left \{ \mathbb{A}_{2}\in \mathbb{D}%
_{2}\mid \left \vert \mathbf{a}_{0}\right \vert =\left \vert \mathbf{a}%
_{2}\right \vert =1\text{ and }\left \{
\begin{array}{c}
\mathbf{a}_{0}\cdot \mathbf{a}_{1}=\mathbf{a}_{0}\cdot \mathbf{a}_{2}=%
\mathbf{a}_{2}\cdot \mathbf{a}_{3}=0 \\
\mathbf{a}_{0}\cdot \mathbf{a}_{3}=-\mathbf{a}_{1}\cdot \mathbf{a}_{2}%
\end{array}%
\right. \right \}  \label{3.1}
\end{equation}%
where $\mathbb{A}=\mathbf{a}_{0}+\varepsilon \mathbf{a}_{1}$ and $\mathbb{A}%
^{\ast }=\mathbf{a}_{2}+\varepsilon \mathbf{a}_{3}.$ We have the following
theorem.

\begin{proposition}
\label{P1}The corresponding ruled surface of hyper-dual curve $\Gamma
_{2}(t)=\left( \mathbb{A}(t),\mathbb{A}^{\ast }(t)\right) ,$ in hyper-dual
unit sphere $\mathbb{S}_{\mathbb{D}_{2}}^{2},$ is developable in $\mathbb{D}$%
\ if and only if
\begin{equation*}
\mathbf{a}_{0}^{\prime }\cdot \mathbf{a}_{2}^{\prime }=0\text{ and }\mathbf{a%
}_{0}^{\prime }\cdot \mathbf{a}_{3}^{\prime }=-\mathbf{a}_{1}^{\prime }\cdot
\mathbf{a}_{2}^{\prime }
\end{equation*}
\end{proposition}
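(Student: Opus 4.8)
The plan is to translate the developability condition for the ruled surface into a condition on the defining data, and then expand everything using the hyper-dual (and underlying dual) structure. By Theorem~\ref{T2}, the ruled surface associated to $\Gamma_2(t)=(\mathbb{A}(t),\mathbb{A}^{\ast}(t))$ is
\[
\Phi_2(t,\mathbb{U})=\mathbb{A}(t)\times\mathbb{A}^{\ast}(t)+\mathbb{U}\,\mathbb{A}(t),
\]
so its base curve is $\mathbb{B}(t)=\mathbb{A}(t)\times\mathbb{A}^{\ast}(t)$ and its ruling is $\mathbb{B}^{\ast}(t)=\mathbb{A}(t)$. The developability criterion stated in Section~2, applied in $\mathbb{D}$, is
\[
\det\!\bigl(\mathbb{B}'(t),\,\mathbb{B}^{\ast}(t),\,\mathbb{B}^{\ast\prime}(t)\bigr)=0,
\]
i.e. $\det\!\bigl((\mathbb{A}\times\mathbb{A}^{\ast})',\,\mathbb{A},\,\mathbb{A}'\bigr)=0$. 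First I would differentiate the base curve using the product rule for the dual vector product from Section~2, obtaining $(\mathbb{A}\times\mathbb{A}^{\ast})'=\mathbb{A}'\times\mathbb{A}^{\ast}+\mathbb{A}\times\mathbb{A}^{\ast\prime}$, and substitute it into the determinant.

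The second step is to simplify this dual-valued triple product. Here I would exploit the sphere constraints encoded in $U\mathbb{S}_{\mathbb{D}_2}^2$, namely $\langle\mathbb{A},\mathbb{A}\rangle=\langle\mathbb{A}^{\ast},\mathbb{A}^{\ast}\rangle=1$ and $\langle\mathbb{A},\mathbb{A}^{\ast}\rangle=0$, together with their differentiated forms $\langle\mathbb{A}',\mathbb{A}\rangle=0$ and $\langle\mathbb{A}',\mathbb{A}^{\ast}\rangle+\langle\mathbb{A},\mathbb{A}^{\ast\prime}\rangle=0$. Using the scalar-triple-product identity $\det(X,Y,Z)=\langle X\times Y,Z\rangle$ (valid in $\mathbb{D}$ because the dual inner and cross products obey the usual algebraic identities componentwise), the triple product should collapse, after cancellation of the terms forced to vanish by orthogonality, to a single dual scalar expression in $\mathbb{A},\mathbb{A}',\mathbb{A}^{\ast},\mathbb{A}^{\ast\prime}$. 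I expect it to reduce to something of the form $\langle\mathbb{A}',\mathbb{A}^{\ast\prime}\rangle$ (possibly up to a sign or an additive term that vanishes on the constraint set).

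The final step is to expand the resulting dual-scalar equation into its non-dual and dual parts. Writing $\mathbb{A}=\mathbf{a}_0+\varepsilon\mathbf{a}_1$ and $\mathbb{A}^{\ast}=\mathbf{a}_2+\varepsilon\mathbf{a}_3$ as in Eq.~(\ref{3.1}), the dual inner product $\langle\mathbb{A}',\mathbb{A}^{\ast\prime}\rangle$ has non-dual part $\mathbf{a}_0'\cdot\mathbf{a}_2'$ and dual part $\mathbf{a}_0'\cdot\mathbf{a}_3'+\mathbf{a}_1'\cdot\mathbf{a}_2'$. Setting the dual number equal to zero means setting both parts to zero, which yields exactly
\[
\mathbf{a}_0'\cdot\mathbf{a}_2'=0\quad\text{and}\quad\mathbf{a}_0'\cdot\mathbf{a}_3'=-\mathbf{a}_1'\cdot\mathbf{a}_2',
\]
the claimed condition.

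The main obstacle I anticipate is the second step: correctly carrying out the cancellations in the dual triple product without dropping or misattributing terms, since the product rule must be applied inside both the cross product and the inner product, and the constraint relations must be invoked in the right order. In particular I would need to verify that the dual cross product and dual inner product genuinely satisfy $\det(X,Y,Z)=\langle X\times Y,Z\rangle$ and the antisymmetry/orthogonality identities at the dual-number level, rather than merely componentwise in an ad hoc way; once that bookkeeping is under control, the expansion into $\mathbf{a}_i$-components in the last step is routine.
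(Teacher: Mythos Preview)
Your proposal is correct and follows essentially the same route as the paper: the paper also reduces the developability determinant to the single dual scalar $\langle\mathbb{A}',\mathbb{A}^{\ast\prime}\rangle$ and then reads off the real and $\varepsilon$-parts to obtain the two stated equations. One small remark: the simplification only requires the $\mathbb{S}_{\mathbb{D}_2}^2$ constraints $|\mathbb{A}|=1$ and $\langle\mathbb{A},\mathbb{A}^{\ast}\rangle=0$ (and their derivatives), so you need not invoke the extra $|\mathbb{A}^{\ast}|=1$ condition from $U\mathbb{S}_{\mathbb{D}_2}^2$.
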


\begin{proof}
Let $\Phi _{2}\left( t,\mathbb{U}\right) =\mathbb{A}\left( t\right) \times
\mathbb{A}^{\ast }\left( t\right) +\mathbb{UA}\left( t\right) $ be a
corresponding ruled surface of hyper-dual curve $\Gamma _{2}(t)=\left(
\mathbb{A}(t),\mathbb{A}^{\ast }(t)\right) $ in $\mathbb{S}_{\mathbb{D}%
_{2}}^{2}.$ We have%
\begin{equation*}
\det \left( \left( \mathbb{A}\times \mathbb{A}^{\ast }\right) ^{\prime },%
\mathbb{A}^{\prime },\mathbb{A}\right) =\left \langle \mathbb{A}^{\prime },%
\mathbb{A}^{\ast \prime }\right \rangle
\end{equation*}%
Using the developability condition and the Eq.(\ref{2.41}), the last
equation turns to%
\begin{equation*}
\mathbf{a}_{0}^{\prime }\times \mathbf{a}_{2}^{\prime }+\varepsilon (\mathbf{%
a}_{0}^{\prime }\times \mathbf{a}_{3}^{\prime }+\mathbf{a}_{1}^{\prime
}\times \mathbf{a}_{2}^{\prime })=0
\end{equation*}%
and%
\begin{equation*}
\mathbf{a}_{0}^{\prime }\cdot \mathbf{a}_{2}^{\prime }=0\text{ and }\mathbf{a%
}_{0}^{\prime }\cdot \mathbf{a}_{3}^{\prime }=-\mathbf{a}_{1}^{\prime }\cdot
\mathbf{a}_{2}^{\prime }
\end{equation*}%
which proof the Proposition.

\begin{corollary}
\label{C11}Let $\Gamma _{2}(t)=\left( \mathbb{A}(t),\mathbb{A}^{\ast
}(t)\right) $ be a hyper-dual curve in $\mathbb{S}_{\mathbb{D}_{2}}^{2}$.
The corresponding ruled surface $\Phi _{2}\left( t,\mathbb{U}\right) $ of $%
\Gamma _{2}$ is developable if and only if \newline
\textbf{i.} $\Gamma _{2}$ and $\mathbb{A}(t)$ have the same arc-length
parameter or,\newline
\textbf{ii}. $\Gamma _{2}$ and $\mathbf{a}_{0}(t)$ have the same arc-length
parameter and the corresponding ruled surface $\Phi \left( t,u\right) $ in $%
\mathbb{R}^{3}$ to $\mathbb{A}$ is developable.
\end{corollary}
\end{proof}

\begin{proof}
The norm of $\Gamma _{2}^{\prime }$ is
\begin{equation*}
\left \vert \Gamma _{2}^{\prime }(t)\right \vert =\left \vert \mathbb{A}%
^{\prime }\right \vert +\varepsilon ^{\ast }\frac{<\mathbb{A}^{\prime },%
\mathbb{A}^{\prime \ast }>}{\left \vert \mathbb{A}^{\prime }\right \vert }
\end{equation*}%
Its arc-length parameter $t_{2}$ is%
\begin{eqnarray}
t_{2} &=&\int_{0}^{t}\left \vert \Gamma _{2}^{\prime }(s)\right \vert ds
\notag \\
&=&\int_{0}^{t}\left \vert \mathbb{A}^{\prime }(s)\right \vert
ds+\varepsilon ^{\ast }\int_{0}^{t}\frac{<\mathbb{A}^{\prime }(s),\mathbb{A}%
^{\prime \ast }(s)>}{\left \vert \mathbb{A}^{\prime }(s)\right \vert }ds
\label{3.3} \\
&=&\int_{0}^{t}\left \vert \mathbf{a}_{0}^{\prime }(s)\right \vert
ds+\varepsilon \int_{0}^{t}\frac{<\mathbf{a}_{0}^{\prime }(s),\mathbf{a}%
_{1}^{\prime }(s)>}{\sqrt{<\mathbf{a}_{0}^{\prime }(s),\mathbf{a}%
_{0}^{\prime }(s)>}}ds+\varepsilon ^{\ast }\int_{0}^{t}\frac{<\mathbb{A}%
^{\prime }(s),\mathbb{A}^{\prime \ast }(s)>}{\left \vert \mathbb{A}^{\prime
}(s)\right \vert }ds  \notag \\
&=&t+\varepsilon \int_{0}^{t}\frac{<\mathbf{a}_{0}^{\prime }(s),\mathbf{a}%
_{1}^{\prime }(s)>}{\sqrt{<\mathbf{a}_{0}^{\prime }(s),\mathbf{a}%
_{0}^{\prime }(s)>}}ds+\varepsilon ^{\ast }\int_{0}^{t}\frac{<\mathbb{A}%
^{\prime }(s),\mathbb{A}^{\prime \ast }(s)>}{\left \vert \mathbb{A}^{\prime
}(s)\right \vert }ds  \label{3.5}
\end{eqnarray}%
Using the Proposition \ref{P1} the Eq.(\ref{3.3}), we obtaint the proof of
the assertion (\textit{i}). From the Proposition $4$ page $6$ in \cite{hby}
and the Eq.(\ref{3.5}) we can easily get the assertion (\textit{ii}).
\end{proof}

Note that the Corollary \ref{C11} present an extanded result of Proposition $%
4$ given in \cite{hby} to hyper-dual context.

\begin{proposition}
To each curve on $\mathbb{S}_{\mathbb{D}_{2}}^{2}$ corresponds a ruled
surfaces in $\mathbb{R}^{3}$ and a congruance ruled surfaces with a common
base curves and orthogonal ruling directions in $\mathbb{R}^{3}$.
\end{proposition}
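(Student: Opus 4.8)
The plan is to split the hyper-dual curve $\Gamma_2$ into its two dual-vector components and feed each one through the classical E.~Study map (Theorem~\ref{TE1}). Writing $\mathbb{A}=\mathbf{a}_0+\varepsilon\mathbf{a}_1$ and $\mathbb{A}^{\ast}=\mathbf{a}_2+\varepsilon\mathbf{a}_3$, membership $\Gamma_2(t)\in\mathbb{S}_{\mathbb{D}_2}^{2}$ forces $\left\vert\mathbb{A}\right\vert=1$, so $\mathbb{A}(t)$ is a curve on the unit dual sphere $\mathbb{S}_{\mathbb{D}}^{2}$. By the E.~Study map each point $\mathbb{A}(t)$ is an oriented line in $\mathbb{R}^3$ with direction $\mathbf{a}_0(t)$ and moment $\mathbf{a}_1(t)$, and as $t$ varies this one-parameter family of lines sweeps the ruled surface
\[
\Phi_1(t,u)=\mathbf{a}_0(t)\times\mathbf{a}_1(t)+u\,\mathbf{a}_0(t),
\]
whose closest-point base curve is $\mathbf{a}_0\times\mathbf{a}_1$. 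This already gives the first assertion: every curve on $\mathbb{S}_{\mathbb{D}_2}^{2}$ produces a ruled surface in $\mathbb{R}^3$.

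For the congruence I would treat $\mathbb{A}^{\ast}$ the same way, after normalising it to the unit dual vector $\mathbb{A}^{\ast}/\left\vert\mathbb{A}^{\ast}\right\vert$ (equivalently, working on $U\mathbb{S}_{\mathbb{D}_2}^{2}$, where $\left\vert\mathbf{a}_2\right\vert=1$ and $\mathbf{a}_2\cdot\mathbf{a}_3=0$ by Eq.(\ref{3.1})). Then $\mathbb{A}^{\ast}(t)$ is again a curve on $\mathbb{S}_{\mathbb{D}}^{2}$ and, by the E.~Study map, corresponds to a second ruled surface
\[
\Phi_2(t,v)=\mathbf{a}_2(t)\times\mathbf{a}_3(t)+v\,\mathbf{a}_2(t)
\]
with ruling direction $\mathbf{a}_2(t)$. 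The pair $\{\Phi_1,\Phi_2\}$ is the congruence, and it remains to read the two claimed geometric features out of the sphere relation $\langle\mathbb{A},\mathbb{A}^{\ast}\rangle=0$.

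The decisive step is to expand that relation using Eq.(\ref{2.41}),
\[
\langle\mathbb{A},\mathbb{A}^{\ast}\rangle
=\mathbf{a}_0\cdot\mathbf{a}_2
+\varepsilon\bigl(\mathbf{a}_0\cdot\mathbf{a}_3+\mathbf{a}_1\cdot\mathbf{a}_2\bigr)=0,
\]
and to interpret its two components separately (both already appear in Eq.(\ref{2.51})). The real part $\mathbf{a}_0\cdot\mathbf{a}_2=0$ says precisely that the ruling directions of $\Phi_1$ and $\Phi_2$ are orthogonal. The dual part $\mathbf{a}_0\cdot\mathbf{a}_3+\mathbf{a}_1\cdot\mathbf{a}_2=0$ is the vanishing mutual moment (Pl\"ucker reciprocal product) of the lines $L_1(t)=(\mathbf{a}_0:\mathbf{a}_1)$ and $L_2(t)=(\mathbf{a}_2:\mathbf{a}_3)$; since their directions are orthogonal they cannot be parallel, so the vanishing mutual moment forces them to meet in a single point $\mathbf{c}(t)$. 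Reparametrising both rulings through that point yields $\Phi_1(t,u)=\mathbf{c}(t)+u\,\mathbf{a}_0(t)$ and $\Phi_2(t,v)=\mathbf{c}(t)+v\,\mathbf{a}_2(t)$, a congruence of ruled surfaces sharing the common base curve $\mathbf{c}(t)$ and carrying orthogonal ruling directions $\mathbf{a}_0(t)\perp\mathbf{a}_2(t)$, as required.

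I expect the genuine obstacle to be the incidence step: converting the vanishing dual part $\mathbf{a}_0\cdot\mathbf{a}_3+\mathbf{a}_1\cdot\mathbf{a}_2=0$ into the geometric fact that $L_1(t)$ and $L_2(t)$ actually intersect, and then exhibiting the explicit common base curve $\mathbf{c}(t)$ by writing $\mathbf{a}_1=\mathbf{c}\times\mathbf{a}_0$, $\mathbf{a}_3=\mathbf{c}\times\mathbf{a}_2$ and solving for $\mathbf{c}$. A secondary nuisance is the normalisation of $\mathbb{A}^{\ast}$ on the full sphere $\mathbb{S}_{\mathbb{D}_2}^{2}$, where $\left\vert\mathbf{a}_2\right\vert$ need not equal $1$; restricting to $U\mathbb{S}_{\mathbb{D}_2}^{2}$, or dividing by $\left\vert\mathbb{A}^{\ast}\right\vert$ wherever $\mathbf{a}_2\neq0$, removes it without disturbing either the orthogonality of the directions or the incidence of the two lines, both of which hold already on all of $\mathbb{S}_{\mathbb{D}_2}^{2}$.
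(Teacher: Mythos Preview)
Your route diverges from the paper's and, in fact, is essentially the argument for the \emph{next} proposition (the one on $U\mathbb{S}_{\mathbb{D}_2}^{2}$). The paper does not send $\mathbb{A}$ and $\mathbb{A}^{\ast}$ separately through the classical E.~Study map. Instead it starts from the single dual ruled surface of Theorem~\ref{T2},
\[
\Phi_2(t,\mathbb{U})=\mathbb{A}\times\mathbb{A}^{\ast}+\mathbb{U}\,\mathbb{A},\qquad \mathbb{U}=u+\varepsilon u^{\ast},
\]
and expands it in $\varepsilon$. The real part $\mathbf{a}_0\times\mathbf{a}_2+u\,\mathbf{a}_0$ is the promised ruled surface in $\mathbb{R}^3$; the dual part
\[
(\mathbf{a}_1\times\mathbf{a}_2+\mathbf{a}_0\times\mathbf{a}_3)+u\,\mathbf{a}_1+u^{\ast}\mathbf{a}_0
\]
is a genuine line \emph{congruence} (two independent ruling parameters $u,u^{\ast}$) over the common base curve $K(t)=\mathbf{a}_1\times\mathbf{a}_2+\mathbf{a}_0\times\mathbf{a}_3$, with orthogonal ruling directions $\mathbf{a}_0$ and $\mathbf{a}_1$. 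The orthogonality invoked is $\mathbf{a}_0\cdot\mathbf{a}_1=0$ from $\left\vert\mathbb{A}\right\vert=1$, not $\mathbf{a}_0\cdot\mathbf{a}_2=0$; no normalisation of $\mathbb{A}^{\ast}$ is needed, and $\mathbf{a}_2,\mathbf{a}_3$ appear only inside the base curves, never as ruling directions.

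What you build instead is a \emph{pair} of ruled surfaces $\Phi_1,\Phi_2$ (one from $\mathbb{A}$, one from $\mathbb{A}^{\ast}$), not one ruled surface together with a two-parameter congruence; your orthogonality is $\mathbf{a}_0\perp\mathbf{a}_2$ and your shared base is an incidence point $\mathbf{c}(t)$ obtained from the vanishing mutual moment. That incidence/mutual-moment argument is correct and attractive, but it is exactly the mechanism the paper deploys in the subsequent $U\mathbb{S}_{\mathbb{D}_2}^{2}$ proposition, where two ruled surfaces with a common base curve are the intended output. For the present proposition the key object is the $\varepsilon$-splitting of $\Phi_2(t,\mathbb{U})$, which your proof never forms; consequently you do not produce the congruence $K(t)+u\,\mathbf{a}_1+u^{\ast}\mathbf{a}_0$ that the statement is about.
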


\begin{proof}
Let $\Gamma _{2}\left( t\right) =(\mathbb{A}\left( t\right) ,\mathbb{A}%
^{\ast }\left( t\right) )$ be a curve on $\mathbb{S}_{\mathbb{D}_{2}}^{2},$
by the Theorem \ref{T2}, we have the corresponding ruled surface%
\begin{eqnarray*}
\Phi _{2}\left( t,\mathbb{U}\right) &=&\mathbb{A}\times \mathbb{A}^{\ast }+%
\mathbb{UA} \\
&=&\left( \mathbf{a}_{0}+\varepsilon \mathbf{a}_{1}\right) \times \left(
\mathbf{a}_{2}+\varepsilon \mathbf{a}_{3}\right) +\left( \mathbf{u}%
+\varepsilon \mathbf{u}^{\ast }\right) \left( \mathbf{a}_{0}+\varepsilon
\mathbf{a}_{1}\right) \\
&=&\mathbf{a}_{0}\times \mathbf{a}_{2}+\varepsilon \mathbf{a}_{1}\times
\mathbf{a}_{2}+\varepsilon \mathbf{a}_{0}\times \mathbf{a}_{3}+\mathbf{ua}%
_{0}+\mathbf{u}\varepsilon \mathbf{a}_{1}+\varepsilon \mathbf{u}^{\ast }%
\mathbf{a}_{0} \\
&=&\underset{I}{\underbrace{\mathbf{a}_{0}\times \mathbf{a}_{2}+\mathbf{ua}%
_{0}}}+\underset{II}{\varepsilon \left( \underbrace{\left( \mathbf{a}%
_{1}\times \mathbf{a}_{2}+\mathbf{a}_{0}\times \mathbf{a}_{3}\right) +%
\mathbf{ua}_{1}+\mathbf{u}^{\ast }\mathbf{a}_{0}}\right) }
\end{eqnarray*}%
The first part present a real dual part of ruled surface on $\mathbb{R}^{3}$%
, $\Phi \left( t,\mathbf{u}\right) =\mathbf{a}_{0}\times \mathbf{a}_{2}+%
\mathbf{ua}_{0}$ and the second part present a congruance ( or a family of)
ruled surfaces with common base curve
\begin{equation*}
K\left( t\right) =\left( \mathbf{a}_{1}\times \mathbf{a}_{2}\right) \left(
t\right) +\left( \mathbf{a}_{0}\times \mathbf{a}_{3}\right) \left( t\right)
\end{equation*}%
and perpendicular direction (i.e. orthogonal ruling $\mathbf{a}_{1}\cdot
\mathbf{a}_{0}=0$)$.$
\end{proof}

\begin{proposition}
To each curve on $\mathbb{S}_{\mathbb{D}_{2}}^{2}$ corresponds an infinite
ruled surface couple a ruled surfaces in $\mathbb{R}^{3}$ with a common base
curves and orthogonal ruling directions in $\mathbb{R}^{3}$.
\end{proposition}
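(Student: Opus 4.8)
The plan is to run the same dual/real splitting used in the preceding proposition, but now to read off the two free scalar parameters $u$ and $u^{\ast}$ separately so that the infinite family becomes explicit. First I would take a curve $\Gamma_2(t)=(\mathbb{A}(t),\mathbb{A}^{\ast}(t))$ on $\mathbb{S}_{\mathbb{D}_2}^2$ and invoke Theorem \ref{T2} to attach to it the hyper-dual ruled surface $\Phi_2(t,\mathbb{U})=\mathbb{A}\times\mathbb{A}^{\ast}+\mathbb{U}\mathbb{A}$, with $\mathbb{U}=u+\varepsilon u^{\ast}$. Writing $\mathbb{A}=\mathbf{a}_0+\varepsilon\mathbf{a}_1$ and $\mathbb{A}^{\ast}=\mathbf{a}_2+\varepsilon\mathbf{a}_3$ and expanding exactly as in the previous proof gives
\begin{equation*}
\Phi_2(t,\mathbb{U})=\bigl(\mathbf{a}_0\times\mathbf{a}_2+u\,\mathbf{a}_0\bigr)+\varepsilon\bigl((\mathbf{a}_1\times\mathbf{a}_2+\mathbf{a}_0\times\mathbf{a}_3)+u\,\mathbf{a}_1+u^{\ast}\mathbf{a}_0\bigr).
\end{equation*}

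Next I would interpret the two pieces. The non-dual part $\Phi(t,u)=\mathbf{a}_0\times\mathbf{a}_2+u\,\mathbf{a}_0$ is a single genuine ruled surface in $\mathbb{R}^3$, with base curve $\mathbf{a}_0\times\mathbf{a}_2$ and ruling direction $\mathbf{a}_0$. The key new observation is to treat $u^{\ast}$ as a parameter independent of $u$: for each fixed value $u^{\ast}=c\in\mathbb{R}$ the dual coefficient
\begin{equation*}
\Psi_c(t,u)=(\mathbf{a}_1\times\mathbf{a}_2+\mathbf{a}_0\times\mathbf{a}_3)+c\,\mathbf{a}_0+u\,\mathbf{a}_1
\end{equation*}
is again a ruled surface in $\mathbb{R}^3$, with ruling direction $\mathbf{a}_1$ and base curve $K(t)=(\mathbf{a}_1\times\mathbf{a}_2+\mathbf{a}_0\times\mathbf{a}_3)+c\,\mathbf{a}_0$. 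As $c$ runs over $\mathbb{R}$ these base curves differ only by the translation $c\,\mathbf{a}_0$, so the family $\{\Psi_c\}_{c\in\mathbb{R}}$ is an infinite family of ruled surfaces sharing (up to this translation) the common base curve $K$ and all having the same ruling direction $\mathbf{a}_1$. This is the infinite ruled surface family paired with the single real surface $\Phi(t,u)$.

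Finally I would establish the orthogonality of the ruling directions. The ruling of $\Phi$ is $\mathbf{a}_0$ and the ruling of every member of the family is $\mathbf{a}_1$; since $\Gamma_2$ lies on $\mathbb{S}_{\mathbb{D}_2}^2$, the defining relations Eq.(\ref{2.51}) give in particular $\mathbf{a}_0\cdot\mathbf{a}_1=0$, so the two ruling directions are orthogonal in $\mathbb{R}^3$, which closes the argument. The main obstacle I anticipate is interpretive rather than computational: the expansion is identical to the previous proposition, so the real content is to argue cleanly that the second, free scalar parameter $u^{\ast}$ is what produces the infinite family (a congruence), and to pin down exactly which vectors play the role of common base curve and of orthogonal rulings. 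Some care is also needed to state whether the common base curve is taken literally (the $u^{\ast}=0$ member) or up to translation along $\mathbf{a}_0$, and to confirm that the relevant orthogonality relation holds on all of $\mathbb{S}_{\mathbb{D}_2}^2$ and not only on $U\mathbb{S}_{\mathbb{D}_2}^2$.
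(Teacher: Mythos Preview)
Your expansion and the identification of the two ruling directions $\mathbf{a}_0$ and $\mathbf{a}_1$ with $\mathbf{a}_0\cdot\mathbf{a}_1=0$ are exactly what the paper does. The divergence is in how the word ``couple'' and the phrase ``common base curve'' are handled, and this is where your argument does not close.

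In your reading you fix the real surface $\Phi(t,u)=\mathbf{a}_0\times\mathbf{a}_2+u\,\mathbf{a}_0$ once and for all and then produce an infinite family $\Psi_c$ by freezing $u^{\ast}=c$. But the base curve of $\Phi$ is $\mathbf{a}_0\times\mathbf{a}_2$, while the base curve of $\Psi_c$ is $(\mathbf{a}_1\times\mathbf{a}_2+\mathbf{a}_0\times\mathbf{a}_3)+c\,\mathbf{a}_0$; these are not the same curve, not even up to a rigid translation, since $\mathbf{a}_0=\mathbf{a}_0(t)$ varies along the curve. So no pair $(\Phi,\Psi_c)$ in your construction actually has a common base curve, and the ``up to translation'' escape hatch you flag at the end does not apply. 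What you have produced is the congruence already obtained in the preceding proposition, re-sliced, rather than the new statement.

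The paper's device is different: it promotes $u^{\ast}$ to an arbitrary \emph{function} $u^{\ast}(t)$ and then slides each surface along its own ruling by explicit amounts
\[
f(t)=\langle \mathbf{a}_1\times\mathbf{a}_2,\mathbf{a}_0\rangle+u^{\ast}(t),\qquad
g(t)=\langle \mathbf{a}_0\times(\mathbf{a}_2-\mathbf{a}_3),\mathbf{a}_1\rangle,
\]
forming $\overline{I}(t,u)=\mathbf{a}_0\times\mathbf{a}_2+f\,\mathbf{a}_0+u\,\mathbf{a}_0$ and $\overline{II}(t,u)=(\mathbf{a}_1\times\mathbf{a}_2+\mathbf{a}_0\times\mathbf{a}_3)+g\,\mathbf{a}_1+u^{\ast}(t)\,\mathbf{a}_0+u\,\mathbf{a}_1$. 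The point of $f$ and $g$ is precisely to force the two new base curves to coincide literally, so that $(\overline{I},\overline{II})$ is a genuine couple on a common base with orthogonal rulings $\mathbf{a}_0,\mathbf{a}_1$. The infinitude then comes from the free choice of the function $u^{\ast}(t)$, giving infinitely many such couples. This shifting-along-the-ruling step (choosing $f$ and $g$) is the idea missing from your plan; without it the ``common base curve'' clause is not established.
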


\begin{proof}
Let $\Gamma _{2}\left( t\right) =(\mathbb{A}\left( t\right) ,\mathbb{A}%
^{\ast }\left( t\right) )$ be a curve on $\mathbb{S}_{\mathbb{D}_{2}}^{2},$
form the Proposition \ref{T2}, we have
\begin{eqnarray*}
\Phi _{2}\left( t,\mathbb{U}\right) &=&\mathbb{A}\times \mathbb{A}^{\ast }+%
\mathbb{UA} \\
&=&\mathbf{a}_{0}\times \mathbf{a}_{2}+\mathbf{ua}_{0}+\varepsilon \left(
\left( \mathbf{a}_{1}\times \mathbf{a}_{2}+\mathbf{a}_{0}\times \mathbf{a}%
_{3}\right) +\mathbf{ua}_{1}+\mathbf{u}^{\ast }\mathbf{a}_{0}\right)
\end{eqnarray*}%
where
\begin{equation*}
\left \{
\begin{array}{l}
I\left( t,u\right) =\mathbf{a}_{0}\times \mathbf{a}_{2}+\mathbf{ua}_{0} \\
II\left( t,u,u^{\ast }\right) =\left( \mathbf{a}_{1}\times \mathbf{a}_{2}+%
\mathbf{a}_{0}\times \mathbf{a}_{3}\right) +\mathbf{ua}_{1}+\mathbf{u}^{\ast
}\mathbf{a}_{0}%
\end{array}%
\right.
\end{equation*}%
present a ruled srface and a congruence ruled surface, respectively. Now,
suppose $\mathbf{u}^{\ast }=\mathbf{u}^{\ast }\left( t\right) $ is function,
let the couple ruled surface
\begin{equation}
\left \{
\begin{array}{l}
\overline{I}\left( t,u\right) =\mathbf{a}_{0}\times \mathbf{a}_{2}+f\mathbf{a%
}_{0}+\mathbf{ua}_{0} \\
\overline{II}\left( t,u,u^{\ast }\right) =\left( \mathbf{a}_{1}\times
\mathbf{a}_{2}+\mathbf{a}_{0}\times \mathbf{a}_{3}\right) +\mathbf{ua}_{1}+g%
\mathbf{a}_{1}+\mathbf{u}^{\ast }\left( t\right) \mathbf{a}_{0}%
\end{array}%
\right.  \label{2.61}
\end{equation}%
where $f$ and $g$ are function in parameter $t$. By taking the the functions
\begin{equation*}
\left \{
\begin{array}{c}
f\left( t\right) =\left \langle \mathbf{a}_{1}\times \mathbf{a}_{2},\mathbf{a%
}_{0}\right \rangle +\mathbf{u}^{\ast }\left( t\right) \\
g\left( t\right) =\left \langle \mathbf{a}_{0}\times \left( \mathbf{a}_{2}-%
\mathbf{a}_{3}\right) ,\mathbf{a}_{1}\right \rangle%
\end{array}%
\right.
\end{equation*}%
and substuting
\end{proof}

\section{Unit tangent bundle of $T\mathbb{S}^{2}$ and ruled surface}

In this section, we give a relationship between the choice of two ruled
surfaces in $\mathbb{R}^{3}$ and curves in hyper-dual numbers $\mathbb{D}%
_{2} $.

\begin{proposition}
To each curve on $U\mathbb{S}_{\mathbb{D}_{2}}^{2}$ corresponds two ruled
surfaces in $\mathbb{R}^{3}$ having the same base curve and their rulings
are perpendicular.
\end{proposition}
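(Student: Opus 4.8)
The plan is to use that a point of $U\mathbb{S}_{\mathbb{D}_2}^2$ is built from \emph{two} unit dual vectors rather than one. Writing $\Gamma_2(t)=(\mathbb{A}(t),\mathbb{A}^*(t))$ with $\mathbb{A}=\mathbf{a}_0+\varepsilon\mathbf{a}_1$ and $\mathbb{A}^*=\mathbf{a}_2+\varepsilon\mathbf{a}_3$, the conditions in Eq.(\ref{3.1}) give $|\mathbf{a}_0|=|\mathbf{a}_2|=1$ and $\mathbf{a}_0\cdot\mathbf{a}_1=\mathbf{a}_2\cdot\mathbf{a}_3=0$, so that \emph{each} of $\mathbb{A}(t)$ and $\mathbb{A}^*(t)$ is a curve on the unit dual sphere $\mathbb{S}_{\mathbb{D}}^2$. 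Applying the E. Study map (Theorem \ref{TE1}) to each factor separately therefore yields, for every $t$, two oriented lines in $\mathbb{R}^3$ --- one with direction $\mathbf{a}_0(t)$ and moment $\mathbf{a}_1(t)$, the other with direction $\mathbf{a}_2(t)$ and moment $\mathbf{a}_3(t)$ --- and, as $t$ varies, two ruled surfaces $\Phi$ and $\Phi^*$ whose rulings are $\mathbf{a}_0(t)$ and $\mathbf{a}_2(t)$.

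Next I would translate the two remaining constraints of Eq.(\ref{3.1}) into line geometry. The real part of $\left\langle\mathbb{A},\mathbb{A}^*\right\rangle=0$ is $\mathbf{a}_0\cdot\mathbf{a}_2=0$, which says precisely that the two rulings $\mathbf{a}_0(t)$ and $\mathbf{a}_2(t)$ are perpendicular. The dual part is $\mathbf{a}_0\cdot\mathbf{a}_3+\mathbf{a}_1\cdot\mathbf{a}_2=0$, which is exactly the reciprocal (Pl\"ucker) product of the two lines; its vanishing is the classical condition for the lines to be coplanar. Since their directions are orthogonal they are not parallel, and coplanarity forces them to meet in a single point $\mathbf{c}(t)$. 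This intersection point is the common base curve: setting $\Phi(t,u)=\mathbf{c}(t)+u\,\mathbf{a}_0(t)$ and $\Phi^*(t,u)=\mathbf{c}(t)+u\,\mathbf{a}_2(t)$ exhibits two ruled surfaces in $\mathbb{R}^3$ with the same base curve $\mathbf{c}$ and mutually perpendicular rulings, which is the assertion.

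The step I expect to be the main obstacle is producing the common point $\mathbf{c}(t)$ concretely and checking that it lies on both lines. Here I would use that $\{\mathbf{a}_0,\mathbf{a}_2,\mathbf{a}_0\times\mathbf{a}_2\}$ is an orthonormal frame of $\mathbb{R}^3$ and solve the two incidence equations $\mathbf{c}\times\mathbf{a}_0=\mathbf{a}_1$ and $\mathbf{c}\times\mathbf{a}_2=\mathbf{a}_3$ by expanding $\mathbf{c}$ in this frame; the compatibility of the two resulting expressions for $\mathbf{c}$ is governed exactly by the dual relation $\mathbf{a}_0\cdot\mathbf{a}_3=-\mathbf{a}_1\cdot\mathbf{a}_2$, so the intersection condition of the previous paragraph is precisely what makes the construction consistent. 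A small point to watch is that the E. Study correspondence pins down each line but not a distinguished point on it, so one must verify that the single curve $\mathbf{c}(t)$ serves simultaneously as a base curve for both rulings, rather than accidentally choosing the two perpendicular feet $\mathbf{a}_0\times\mathbf{a}_1$ and $\mathbf{a}_2\times\mathbf{a}_3$, which in general differ.
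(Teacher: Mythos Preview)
Your argument is correct and follows the same underlying strategy as the paper: apply the E.~Study map to each factor $\mathbb{A}$ and $\mathbb{A}^{\ast}$ separately to obtain two families of lines in $\mathbb{R}^{3}$, read off perpendicularity of the rulings from the real part $\mathbf{a}_{0}\cdot\mathbf{a}_{2}=0$ of $\langle\mathbb{A},\mathbb{A}^{\ast}\rangle=0$, and use the dual part $\mathbf{a}_{0}\cdot\mathbf{a}_{3}+\mathbf{a}_{1}\cdot\mathbf{a}_{2}=0$ to force the two lines, for each $t$, to share a point which becomes the common base curve.

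The presentation differs slightly from the paper's. The paper proceeds constructively: it first writes the two ruled surfaces with base curves at the perpendicular feet $\mathbf{a}_{0}\times\mathbf{a}_{1}$ and $\mathbf{a}_{2}\times\mathbf{a}_{3}$, observes (as you also warn) that these do not coincide, and then slides each foot along its ruling by explicit amounts $f=(\mathbf{a}_{2}\times\mathbf{a}_{3})\cdot\mathbf{a}_{0}$ and $g=(\mathbf{a}_{0}\times\mathbf{a}_{1})\cdot\mathbf{a}_{2}$ to produce a common $k(t)$. Your route is more conceptual: you recognise $\mathbf{a}_{0}\cdot\mathbf{a}_{3}+\mathbf{a}_{1}\cdot\mathbf{a}_{2}$ as the reciprocal (Pl\"ucker) product, so its vanishing together with non-parallel directions forces an intersection point $\mathbf{c}(t)$, which you then characterise by the incidence equations $\mathbf{c}\times\mathbf{a}_{0}=\mathbf{a}_{1}$ and $\mathbf{c}\times\mathbf{a}_{2}=\mathbf{a}_{3}$. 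The two constructions give the same point; your framing explains \emph{why} the paper's choice of $f$ and $g$ works (the equality of the $(\mathbf{a}_{0}\times\mathbf{a}_{2})$-components of the two shifted feet reduces precisely to $\mathbf{a}_{1}\cdot\mathbf{a}_{2}=-\mathbf{a}_{0}\cdot\mathbf{a}_{3}$), while the paper's formula gives the explicit $k(t)$ that your incidence system would otherwise have to solve for.
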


\begin{proof}
Let $\Gamma _{2}\left( t\right) $ be a curve on $U\mathbb{S}_{\mathbb{D}%
_{2}}^{2}$ then
\begin{equation*}
\Gamma _{2}\left( t\right) =\mathbb{A}\left( t\right) +\varepsilon ^{\ast }%
\mathbb{A}^{\ast }\left( t\right) \mid \left \vert \mathbb{A}\right \vert
=\left \vert \mathbb{A}^{\ast }\right \vert =1,\  \left \langle \mathbb{A},%
\mathbb{A}^{\ast }\right \rangle =0
\end{equation*}%
From the Theorem \ref{T2} and the Eq.(\ref{3.1}), $\Gamma _{2}$ correspond
to two ruled surfaces in $\mathbb{R}^{3}$ given by%
\begin{equation*}
\left \{
\begin{array}{c}
\Phi _{1}\left( t,u\right) =\left( \mathbf{a}_{0}\times \mathbf{a}%
_{1}\right) \left( t\right) +u\mathbf{a}_{0}\left( t\right) \\
\Phi _{2}\left( t,v\right) =\left( \mathbf{a}_{2}\times \mathbf{a}%
_{3}\right) \left( t\right) +v\mathbf{a}_{2}\left( t\right)%
\end{array}%
\right.
\end{equation*}%
where $\mathbb{A}=\mathbf{a}_{0}+\varepsilon \mathbf{a}_{1}$ and $\mathbb{A}%
^{\ast }=\mathbf{a}_{2}+\varepsilon \mathbf{a}_{3}.\ $However this two forms
did not give the Theorem's result, for this, for a new form of ruled surfaces%
\begin{equation*}
\left \{
\begin{array}{c}
\Phi _{1}\left( t\right) =\left( \mathbf{a}_{0}\times \mathbf{a}_{1}\right)
\left( t\right) +f\mathbf{a}_{0}\left( t\right) +u\mathbf{a}_{0}\left(
t\right) \\
\Phi _{2}\left( t\right) =\left( \mathbf{a}_{2}\times \mathbf{a}_{3}\right)
\left( t\right) +g\mathbf{a}_{2}\left( t\right) +u\mathbf{a}_{2}\left(
t\right)%
\end{array}%
\right.
\end{equation*}%
we get a same base curve $k(t)$ for the surfaces
\begin{equation*}
k(t)=\left( \mathbf{a}_{0}\times \mathbf{a}_{1}\right) \left( t\right)
+f\left( t\right) \mathbf{a}_{0}\left( t\right) =\left( \mathbf{a}_{2}\times
\mathbf{a}_{3}\right) \left( t\right) +g\left( t\right) \mathbf{a}_{2}\left(
t\right)
\end{equation*}%
where%
\begin{equation*}
\left \{
\begin{array}{c}
f=\mathbf{a}_{2}\times \mathbf{a}_{3}\cdot \mathbf{a}_{0} \\
g=\mathbf{a}_{0}\times \mathbf{a}_{1}\cdot \mathbf{a}_{2}%
\end{array}%
\right.
\end{equation*}%
and using the Eq.(\ref{3.1}), we easily get the orthogonality of ruling
curves i.e. $\left \langle \mathbf{a}_{0},\mathbf{a}_{2}\right \rangle =0.$
\end{proof}

\begin{example}
For a given unit curve $\alpha :I\subset \mathbb{R}\rightarrow \mathbb{R}%
^{3} $ with a Frent frame $\left \{ \mathbf{t},\mathbf{n},\mathbf{b}%
\right
\} ,$ let $\mathbb{A}$ and $\mathbb{A}^{\ast }$ be a dual vectors
given as%
\begin{equation*}
\mathbb{A}\left( t\right) =\mathbf{t}\left( t\right) +\varepsilon \mathbf{n}%
\left( t\right) \text{ and }\mathbb{A}^{\ast }=\mathbf{b}\left( t\right)
+\varepsilon \mathbf{n}\left( t\right)
\end{equation*}%
and hyper-dual vector $\Gamma _{2}\left( t\right) =\mathbb{A}\left( t\right)
+\varepsilon ^{\ast }\mathbb{A}^{\ast }\left( t\right) $ in $U\mathbb{S}_{%
\mathbb{D}_{2}}^{2}.$ The base curve is
\begin{eqnarray*}
k(t) &=&\mathbf{t}\times \mathbf{n+\det }\left( \mathbf{b},\mathbf{n,t}%
\right) \mathbf{t} \\
&=&\mathbf{b-t}
\end{eqnarray*}%
and the corresponding corresponds ruled surfaces in $\mathbb{R}^{3}$ are%
\begin{equation*}
\left \{
\begin{array}{c}
\Phi _{1}\left( t,u\right) =\left( \mathbf{b-t}\right) \left( t\right) +u%
\mathbf{t}\left( t\right) \\
\Phi _{2}\left( t,v\right) =\left( \mathbf{b-t}\right) \left( t\right) +v%
\mathbf{b}\left( t\right)%
\end{array}%
\right.
\end{equation*}%
where their rulings are perpendicular. The developable ruled surfaces $\Phi
_{1}$ and $\Phi _{2}$ along $K$ are normal approximation surfaces to each
other (see for normal approximation surfaces in \cite{ky}, \cite{s}).
\end{example}

\begin{example}
Let $\alpha :I\subset \mathbb{R}\rightarrow \mathbb{R}^{3}$ a curve with a
adapted frame apparatus $\left \{ \mathbf{n},\mathbf{c},\mathbf{w}\right \} $
where $\mathbf{n}$ is the normal vector of $\alpha ,$ $\mathbf{c=}\frac{%
\mathbf{n}^{\prime }}{^{\left \vert \mathbf{n}^{\prime }\right \vert }}$ and
$\mathbf{w=n}\times \mathbf{c}$ is the unit Darboux vector (for a detail see
\cite{bgy}). Let $\mathbb{A}\left( t\right) $ and $\mathbb{A}^{\ast }\left(
t\right) $ be a dual vectors given as%
\begin{equation*}
\mathbb{A}\left( t\right) =\mathbf{n}\left( t\right) +\varepsilon \mathbf{c}%
\left( t\right) \text{ and }\mathbb{A}^{\ast }=\mathbf{w}\left( t\right)
+\varepsilon \mathbf{c}\left( t\right)
\end{equation*}%
and hyper-dual curve $\Gamma _{2}\left( t\right) =\mathbb{A}\left( t\right)
+\varepsilon ^{\ast }\mathbb{A}^{\ast }\left( t\right) $ in $U\mathbb{S}_{%
\mathbb{D}_{2}}^{2}.$ The base curve is
\begin{eqnarray*}
k(t) &=&\mathbf{n}\times \mathbf{c+\det }\left( \mathbf{w},\mathbf{c,n}%
\right) \mathbf{n} \\
&=&\mathbf{w-n}
\end{eqnarray*}%
The corresponding corresponds ruled surfaces in $\mathbb{R}^{3}$ are%
\begin{equation*}
\left \{
\begin{array}{c}
\Phi _{1}\left( t,u\right) =\left( \mathbf{w-n}\right) \left( t\right) +u%
\mathbf{n}\left( t\right) \\
\Phi _{2}\left( t,v\right) =\left( \mathbf{w-n}\right) \left( t\right) +v%
\mathbf{w}\left( t\right)%
\end{array}%
\right.
\end{equation*}%
where their rulings $\mathbf{n}$ and $\mathbf{w}$\ are perpendicular.
Moreover, if $\alpha $ is slant helix then $\Phi _{1}$ and $\Phi _{2}$ are
developable constant angle surfaces. The surfaces $\Phi _{1}$ and $\Phi _{2}$
are normal approximation surfaces to each other.
\end{example}

Inversely, we have the following proposition.

\begin{proposition}
Let given two ruled surfaces in $\mathbb{R}^{3}$ with a same base curves,
then there exist two corresponding curves in $\mathbb{D}_{2}.$
\end{proposition}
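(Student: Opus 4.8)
The plan is to invert the construction of the previous proposition by running the E. Study map (Theorem \ref{TE1}) backwards on each of the two given ruled surfaces and then packaging the resulting pair of dual curves into $\mathbb{D}_2$. First I would write the two surfaces sharing the common base curve $\beta(t)$ in the normalized form
\begin{equation*}
\Phi_1(t,u) = \beta(t) + u\,\mathbf{r}_1(t), \qquad \Phi_2(t,v) = \beta(t) + v\,\mathbf{r}_2(t),
\end{equation*}
where $\mathbf{r}_1,\mathbf{r}_2$ are the unit ruling directions, rescaling the original director curves if needed.

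Next I would read off, for each surface, its one-parameter family of oriented rulings and apply the E. Study map. The ruling of $\Phi_1$ at parameter $t$ is the oriented line through $\beta(t)$ with unit direction $\mathbf{r}_1(t)$, whose dual-vector (Pl\"ucker) image is
\begin{equation*}
\mathbb{A}(t) = \mathbf{r}_1(t) + \varepsilon\big(\beta(t)\times \mathbf{r}_1(t)\big),
\end{equation*}
and $\Phi_2$ gives in the same way $\mathbb{A}^*(t) = \mathbf{r}_2(t) + \varepsilon(\beta(t)\times \mathbf{r}_2(t))$. Since $|\mathbf{r}_i|=1$ and $\mathbf{r}_i\cdot(\beta\times\mathbf{r}_i)=0$, both points lie on $\mathbb{S}_{\mathbb{D}}^2$, so $\mathbb{A}$ and $\mathbb{A}^*$ are honest curves in $\mathbb{D}$.

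Finally I would assemble these two dual curves into hyper-dual curves using the two orderings permitted by the $\varepsilon^*$-structure of $\mathbb{D}_2$, namely
\begin{equation*}
\Gamma_2(t) = \mathbb{A}(t) + \varepsilon^*\mathbb{A}^*(t), \qquad \widetilde{\Gamma}_2(t) = \mathbb{A}^*(t) + \varepsilon^*\mathbb{A}(t),
\end{equation*}
which are the two sought curves in $\mathbb{D}_2$. To confirm that this inverts Theorem \ref{T2} and the preceding proposition, I would decompose $\mathbb{A} = \mathbf{a}_0 + \varepsilon\mathbf{a}_1$ with $\mathbf{a}_0 = \mathbf{r}_1$, $\mathbf{a}_1 = \beta\times\mathbf{r}_1$, and note that the surface $\mathbf{a}_0\times\mathbf{a}_1 + u\,\mathbf{a}_0$ produced by the forward map recovers the line through $\beta$ in direction $\mathbf{r}_1$, since $(\mathbf{a}_0\times\mathbf{a}_1)\times\mathbf{a}_0 = \mathbf{a}_1$; the same holds for $\mathbb{A}^*$.

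The main obstacle is bookkeeping rather than anything deep. Because the moment $\beta\times\mathbf{r}_i$ only records the component of $\beta$ orthogonal to $\mathbf{r}_i$, the base point recovered by the forward map is the foot of the common perpendicular, not $\beta$ itself, so the correspondence is canonical only up to sliding each ruling along its own line; one must therefore either fix the base-point normalization exactly as in the previous proposition (through the functions $f$ and $g$) or phrase the result modulo this reparametrization. A secondary point worth flagging is that $\Gamma_2$ and $\widetilde{\Gamma}_2$ always lie in $\mathbb{D}_2$, but they sit on $U\mathbb{S}_{\mathbb{D}_2}^2$ only when the two rulings are perpendicular, $\mathbf{r}_1\cdot\mathbf{r}_2=0$, which the hypothesis does not require.
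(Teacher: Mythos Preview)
Your proposal is correct and follows essentially the same route as the paper: write the two surfaces with common base curve, apply the E. Study map to each ruling family to obtain the unit dual curves $\mathbb{A}=\mathbf{r}_1+\varepsilon(\beta\times\mathbf{r}_1)$ and $\mathbb{A}^*=\mathbf{r}_2+\varepsilon(\beta\times\mathbf{r}_2)$, and then assemble them in both orders into the two hyper-dual curves $\mathbb{A}+\varepsilon^*\mathbb{A}^*$ and $\mathbb{A}^*+\varepsilon^*\mathbb{A}$. Your additional remarks on the base-point normalization and on the orthogonality condition for membership in $U\mathbb{S}_{\mathbb{D}_2}^2$ go beyond what the paper records but are accurate refinements rather than a different argument.
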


\begin{proof}
Let $\Phi _{1,2}$ be two ruled surfaces in $\mathbb{R}^{3}$ with a same base
curve $K\left( t\right) $, given as%
\begin{equation*}
\left \{
\begin{array}{c}
\Phi _{1}\left( t,u\right) =K\left( t\right) +u\mathbf{a} \\
\Phi _{2}\left( t,v\right) =K\left( t\right) +v\mathbf{a}^{\ast }%
\end{array}%
\right.
\end{equation*}%
according the Theorem \ref{T2}, the corresponding dual vectors are%
\begin{equation*}
\left \{
\begin{array}{c}
\mathbb{A}=\mathbf{a}+\varepsilon \left( K\left( t\right) \times \mathbf{a}%
\right) \\
\mathbb{A}^{\ast }=\mathbf{a}^{\ast }+\varepsilon \left( K\left( t\right)
\times \mathbf{a}^{\ast }\right)%
\end{array}%
\right.
\end{equation*}%
where $\mathbb{A},\mathbb{A}^{\ast }$ are unit in $\mathbb{D},$ and which by
the Eq.(\ref{2.6}), give the the corresponding curves%
\begin{equation*}
\left \{
\begin{array}{c}
\Gamma _{1}\left( t\right) =\left( \mathbb{A}\left( t\right) ,\mathbb{A}%
^{\ast }\left( t\right) \right) \\
\Gamma _{2}\left( t\right) =\left( \mathbb{A}^{\ast }\left( t\right) ,%
\mathbb{A}\left( t\right) \right)%
\end{array}%
\right.
\end{equation*}%
in $\mathbb{D}_{2}.$
\end{proof}

\begin{example}
Let $K$ be a helix curve defined as
\begin{equation*}
K(t)=(r\cos t,r\sin t,ct).
\end{equation*}%
where $r$ and $c$ are non null real. We have unit speed and the unit normal
vectors%
\begin{eqnarray*}
\mathbf{t}_{K} &=&\frac{1}{\sqrt{r^{2}+c^{2}}}(-r\sin t,r\cos t,c) \\
\mathbf{n}_{K} &=&(-\cos t,-\sin t,0).
\end{eqnarray*}%
We gets a two parametric representation of the developable surfaces%
\begin{equation*}
\left \{
\begin{array}{l}
\Phi _{1}\left( t,u\right) =K\left( t\right) +u\mathbf{t}_{K}=\left( r\cos t-%
\frac{ur\sin t}{\sqrt{r^{2}+c^{2}}},r\sin t+\frac{ur\cos t}{\sqrt{r^{2}+c^{2}%
}},ct+\frac{uc}{\sqrt{r^{2}+c^{2}}}\right) \\
\Phi _{2}\left( t,v\right) =K\left( t\right) +v\mathbf{n}_{K}=\left( \left(
r-v\right) \cos t,\left( r-v\right) \sin t,ct\right)%
\end{array}%
\right.
\end{equation*}%
and the corresponding dual vectors are%
\begin{equation*}
\left \{
\begin{array}{l}
\mathbb{A}=\frac{1}{r^{2}+c^{2}}\left[ (-r\sin t,r\cos t,c)+\varepsilon
\left( cr\left( \sin t-t\cos t\right) ,cr\left( \cos t+t\sin t\right)
,r^{2}\right) \right] \\
\mathbb{A}^{\ast }=(-\cos t,-\sin t,0)+\varepsilon ct\left( \sin t,-\cos
t,0\right)%
\end{array}%
\right.
\end{equation*}%
which give a rise of two curves in $\mathbb{D}_{2}$ defined as%
\begin{equation*}
\Gamma _{1}\left( t\right) =\mathbb{A}\left( t\right) +\varepsilon ^{\ast }%
\mathbb{A}^{\ast }\left( t\right) \text{ and }\Gamma _{2}\left( t\right) =%
\mathbb{A}^{\ast }\left( t\right) +\varepsilon ^{\ast }\mathbb{A}\left(
t\right) .
\end{equation*}%
We present the two ruled surfaces $\Phi _{1,2}$ for $r=c=1$, in following
Figure \ref{f}.
\end{example}

\begin{figure}[h]
\centering
\includegraphics[width=2.7in,height=2.1in]{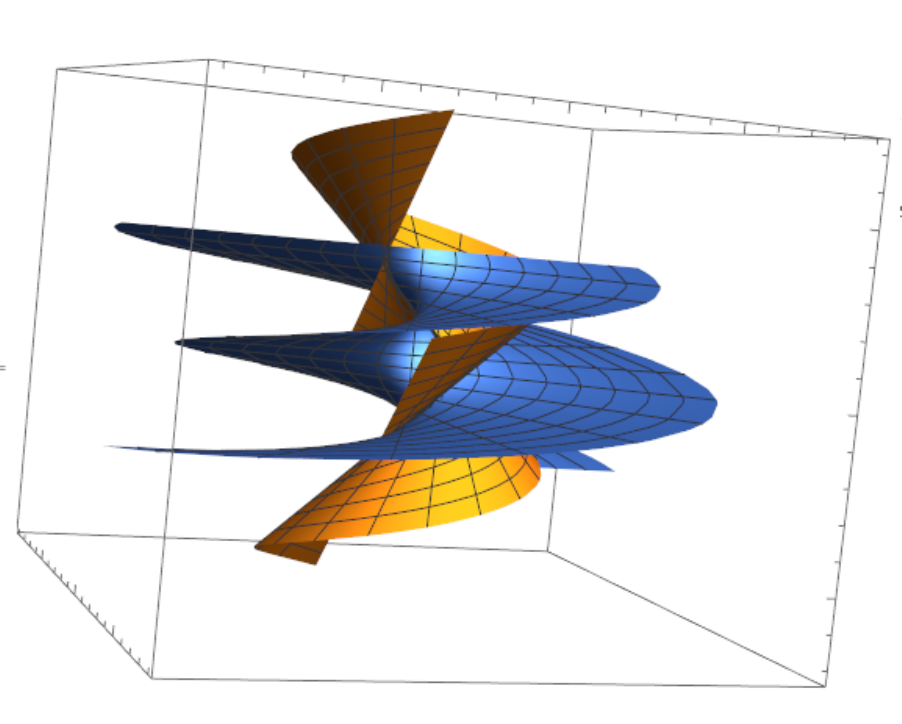}
\caption{Ruled surfaces $\Phi _{1,2}$ in
blue and yellow color, respectively.}
\label{f}
\end{figure}

\end{document}